\newcolumntype{C}[1]{>{\centering\let\newline\\\arraybackslash\hspace{0pt}}m{#1}}
\numberwithin{equation}{section}
\def\@tocline#1#2#3#4#5#6#7{\relax
  \ifnum #1>\c@tocdepth 
  \else
    \par \addpenalty\@secpenalty\addvspace{#2}%
    \begingroup \hyphenpenalty\@M
    \@ifempty{#4}{%
      \@tempdima\csname r@tocindent\number#1\endcsname\relax
    }{%
      \@tempdima#4\relax
    }%
    \parindent\z@ \leftskip#3\relax \advance\leftskip\@tempdima\relax
    \rightskip\@pnumwidth plus4em \parfillskip-\@pnumwidth
    #5\leavevmode\hskip-\@tempdima
      \ifcase #1
       \or\or \hskip 1em \or \hskip 2em \else \hskip 3em \fi%
      #6\nobreak\relax
    \hfill\hbox to\@pnumwidth{\@tocpagenum{#7}}\par
    \nobreak
    \endgroup
  \fi}
\newtheorem{letterthm}{Theorem}
\newtheorem{thm}{Theorem}[section]
\newtheorem*{thm*}{Theorem}
\newtheorem{lemma}[thm]{Lemma}
\newtheorem{property}[thm]{Property}
\newtheorem{cor}[thm]{Corollary}
\newtheorem{conjecture}[thm]{Conjecture}
\theoremstyle{remark}
\newtheorem{remark}[thm]{Remark}
\newcommand{\ep}{\varepsilon}
\newcommand{\ndiv}{\nmid}
\newcommand{\bstack}[2]{#1 \atop #2}
\newcommand{\maps}{\rightarrow}
\newcommand{\intersect}{\cap}
\newcommand{\union}{\cup}
\newcommand{\rk}{{\rm rk }}
\newcommand{\al}{\alpha}
\newcommand{\be}{\beta}
\newcommand{\ga}{\gamma}
\newcommand{\del}{\delta}
\newcommand{\Del}{\Delta}
\newcommand{\om}{\omega}
\newcommand{\sig}{\sigma}
\newcommand{\lam}{\lambda}
\newcommand{\Mcal}{\mathcal{M}}
\newcommand{\Dbf}{\mathbf{D}}
\newcommand{\beq}{\begin{equation}}
\newcommand{\eeq}{\end{equation}}
\newcommand{\Q}{\mathbb{Q}}
\newcommand{\R}{\mathbb{R}}
\newcommand{\F}{\mathbb{F}}
\newcommand{\Z}{\mathbb{Z}}
\newcommand{\Cbf}{\mathbf{C}}
\newcommand{\pfrak}{\mathfrak{p}}
\newcommand{\Cl}{\mathrm{Cl}}
\newcommand{\Bscr}{\mathscr{B}}
\newcommand{\Escr}{\mathscr{E}}
\newcommand{\Fscr}{\mathscr{F}}
\newcommand{\Lscr}{\mathscr{L}}
\newcommand{\Gal}{\mathrm{Gal}}
\newcommand{\Disc}{\mathrm{Disc}\,}
\newcommand{\Ind}{\mathrm{Ind}}
\newcommand{\Nm}{\mathrm{Nm}}
\definecolor{pink}{rgb}{1,.2,.6}
\definecolor{orange}{rgb}{0.7,0.3,0}
\definecolor{blue}{rgb}{.2,.6,.75}
\definecolor{green}{rgb}{.4,.7,.4}
\definecolor{purple}{RGB}{127,0,255}
\newcommand{\xtra}[1]{}
\newcommand{\exendnote}[1]{}
\newcommand{\apphide}[1]{}
\begin{document}

\title[Pierce, Turnage-Butterbaugh, Wood]{On a conjecture for $\ell$-torsion\\ in class groups  of number fields:\\ from the perspective of moments}

\author[Pierce]{Lillian B. Pierce}
\address{Department of Mathematics, Duke University, 120 Science Drive, Durham, NC 27708 USA}
\email{pierce@math.duke.edu}

\author[Turnage-Butterbaugh]{Caroline L. Turnage-Butterbaugh}
\address{Department of Mathematics \& Statistics, Carleton College, 1 North College Street, Northfield, MN 55057 USA}
\email{cturnageb@carleton.edu}

\author[Wood]{Melanie Matchett Wood}
\address{Department of Mathematics,
Harvard University,
1 Oxford Street,
Cambridge, MA 02138 USA}  
\email{mmwood@math.harvard.edu}

\keywords{Class groups, counting number fields, elliptic curves}
\subjclass[2010]{11R29, 11R45}



\begin{abstract}
It is conjectured that within the class group of any number field, for every integer $\ell \geq 1$, the $\ell$-torsion subgroup is very small (in an appropriate sense, relative to the discriminant of the field). In nearly all settings, the full strength of this conjecture remains open, and even partial progress is limited. Significant recent progress toward average versions of the $\ell$-torsion conjecture has relied crucially  on counts for number fields, raising interest in how these two types of question relate. In this paper we make explicit the quantitative relationships between the $\ell$-torsion conjecture and other well-known conjectures: the Cohen-Lenstra heuristics, counts for number fields of fixed discriminant, counts for number fields of bounded discriminant (or related invariants), counts for elliptic curves with fixed conductor. All of these considerations reinforce that we expect the $\ell$-torsion conjecture to be true, despite limited progress toward it. Our perspective focuses on the relation between pointwise bounds, averages, and higher moments, and demonstrates the broad utility of the ``method of moments.'' 

\end{abstract}

\maketitle

\section{Introduction}

Associated to each number field $K/\Q$ is the ideal class group $\Cl_{K}$, a finite abelian group
that encodes information about arithmetic in $K$, and can also be seen as the Galois group of the maximal unramified abelian extension of $K$. Correspondingly, each number field $K$ has a class number, defined to be the cardinality $|\Cl_K|$ of the class group.
Given a field $K$, it is natural to ask about the size and  structure of the class group; moreover, as $K$ varies over an infinite set, or ``family,'' of fields, it is natural to ask how the corresponding class groups are distributed. 
 Interest in such questions has a long history, going back to C.F. Gauss's class number conjecture, early attempts at proving Fermat's Last Theorem, and Dirichlet's development of the class number formula. In particular, although class numbers arise initially from an algebraic construction, the class number formula identifies them with values of $L$-functions and hence to core questions in analytic number theory, including the Generalized Riemann Hypothesis.

Our focus in this paper is  a well-known conjecture for $\ell$-torsion in class groups, which, in the strongest form in which it has been proposed, remains stubbornly out of reach.
For each integer $\ell \geq 1$, the  $\ell$-torsion subgroup of $\Cl_K$ is defined by 
\[
 \Cl_{K}[\ell]:= \{ [\mathfrak{a}]\in\Cl_{K} : \ell [\mathfrak{a}]= \mathrm{Id} \},
\]
in which we write the class group in additive notation.
Thus for example, if the $\ell$-torsion is trivial, the class number is not divisible by $\ell$.

How big is $\Cl_K[\ell]$?
 For any number field $K/\Q$ of degree $n$ and discriminant of absolute value $D_K$, we may trivially bound the $\ell$-torsion subgroup for any integer $\ell \geq 1$ by the size of the full class group, so that\footnote{We will use Vinogradov's notation: $A \ll B$ denotes that there exists a constant $C$  such that $|A| \leq C B,$  and $A\ll_\kappa B$ denotes  that  $C$ may depend on $\kappa$.}
\beq\label{trivial_bound}
1 \leq |\Cl_K[\ell]| \leq |\Cl_K| \ll_{n,\ep} D_K^{1/2+\ep},
 \eeq
 for  all $\ep>0$ arbitrarily small, according to Landau's upper bound for the class number via the Minkowski bound (see e.g. \cite[Theorem 4.4]{Nar80}).
But conjecturally, a much stronger upper bound should hold.  
\begin{conjecture}[$\ell$-torsion Conjecture]\label{conj_class}
\item For every integer $n \geq 2$, every field extension $K/\Q$ of degree $n$ satisfies the property that for all primes $\ell \geq 2$ and every $\ep>0$,
\beq\label{CL_ell_bound}
|\Cl_{K}[\ell]| \ll_{n,\ell,\ep}D_K^\ep.
\eeq
\end{conjecture}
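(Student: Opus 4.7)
The plan is to attempt the conjecture via the method of moments, in the spirit of the paper's overall perspective. The key observation is that an upper bound on a sufficiently high moment of $|\Cl_{K'}[\ell]|$ taken over a family containing $K$, combined with a lower bound on the size of the family, yields a pointwise bound on $|\Cl_K[\ell]|$ by an averaging argument.

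First, I would fix $n$ and $\ell$ and seek to embed $K$ in a natural family $\Fscr_n(X)$ of degree-$n$ number fields with $D_{K'} \leq X$. Natural candidates include all degree-$n$ fields ordered by discriminant, or, when $K$ has a prescribed Galois group $G$, the subfamily of $G$-fields, parametrized either by binary forms (for small $n$) or by class field theory over a resolvent base field. One wants $|\Fscr_n(X)| \gg_n X^{\alpha}$ for some $\alpha > 0$, which is known in many regimes (Davenport-Heilbronn, Bhargava, etc.).

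Second, I would aim to prove a $k$-th moment bound of the shape
\[
\sum_{K' \in \Fscr_n(X)} |\Cl_{K'}[\ell]|^k \ll_{n,\ell,k,\ep} X^{\alpha+\ep}
\]
for every integer $k \geq 1$ and every $\ep>0$. Since $K \in \Fscr_n(X)$ for any $X \geq D_K$, the positivity of each term gives $|\Cl_K[\ell]|^k \leq \sum_{K'} |\Cl_{K'}[\ell]|^k \ll_{n,\ell,k,\ep} D_K^{\alpha+\ep}$, so that $|\Cl_K[\ell]| \ll_{n,\ell,k,\ep} D_K^{(\alpha+\ep)/k}$. Taking $k \to \infty$ along the integers would then yield the conjectured bound $|\Cl_K[\ell]| \ll_{n,\ell,\ep} D_K^\ep$.

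The main obstacle is the moment bound itself, and this is where the approach becomes (currently) hopeless in full generality. Even the first moment is known only in isolated cases, such as Bhargava's work on $2$-torsion in cubic fields or Heath-Brown's on $3$-torsion in quadratic fields. Higher moments require a parametrization of unramified degree-$\ell$ extensions of $K$ inside a Galois closure of degree $n\ell^k$, and the counting problem rapidly outstrips the reach of geometry-of-numbers and circle-method techniques. A more modest fallback, pursued by Ellenberg-Venkatesh under GRH, uses class field theory to convert $\ell$-torsion classes into small primes and gives savings of order $D_K^{1/2 - 1/(2\ell(n-1)) + \ep}$, still far from the conjectured $D_K^\ep$. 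One might hope to leverage the connections the paper draws with Cohen-Lenstra heuristics, field counts, and elliptic curve statistics to bootstrap existing low-moment results into the required high-moment bounds, but bridging this gap is precisely the difficulty that keeps the conjecture open.
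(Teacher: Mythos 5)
The statement you were asked about is Conjecture \ref{conj_class} itself: it is an open conjecture, and the paper does not prove it --- the paper's entire point is to establish \emph{conditional} implications (from moment bounds, from the discriminant multiplicity conjecture, from a generalized Malle conjecture) toward it. Your proposal does not close this gap either, and you say so yourself: the crucial step, the bound $\sum_{K' \in \Fscr_n(X)} |\Cl_{K'}[\ell]|^k \ll_{n,\ell,k,\ep} X^{\alpha+\ep}$ for arbitrarily large $k$, is itself an unproven conjecture (it is what the Cohen--Lenstra--Martinet heuristics predict, and it is known only in a handful of low cases such as the Fouvry--Kl\"uners results related to $4$-torsion in quadratic fields). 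So what you have written is a reduction, not a proof, and the reduction you give is essentially the paper's Theorem \ref{thm_CLM_torsion} and Corollary \ref{cor_CL}: control of arbitrarily high moments over a family, uniformly in $X$, forces the pointwise bound $|\Cl_K[\ell]| \ll_{n,\ell,\ep} D_K^{\ep}$ for every field in the family, with no exceptional set.

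Within that conditional framework your argument is sound, and in fact slightly slicker than the paper's: by bounding the moment directly by $X^{\alpha+\ep}$ and isolating the single term for $K$ at $X=D_K$, you avoid the dyadic decomposition and the separate treatment of small discriminants via the trivial bound \eqref{trivial_bound} that the paper uses (the paper needs these because it phrases the hypothesis as \eqref{CL_k_upper}, with the right side $|\Fscr(X)|^{\al}$ and constants $c_{n,\ell,k,\al}$ not uniform in $k$; your choice of a fixed $k=k(\ep)$ handles the $k$-dependence of the implied constant just as well). But the substance of the conjecture --- producing the high-moment bounds unconditionally --- is exactly what is missing, so the statement remains unproven by your proposal, as it does in the paper.
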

This conjecture  was first raised as a question of Brumer and Silverman \cite[Question $\mathrm{CL}(\ell,d)$]{BruSil96} in the context of counting elliptic curves with fixed conductor; we will return to this original motivation, and a possible more precise rate of growth, in \S \ref{sec_EC}.  It is known to have implications for many other problems: for example, proving this upper bound would imply strong bounds for the ranks of elliptic curves (see \cite[\S 1.2]{EllVen07}) and for counts of certain number fields (see Remarks \ref{remark_Alberts} and  \ref{remark_Hasse}). See also Zhang \cite[Conjecture 3.5]{Zha05} in the context of equidistribution of CM points on Shimura varieties,  Duke \cite[p. 166]{Duk98} in consideration of counting number fields, and
Belolipetsky and Lubotzky \cite[Thm. 7.5]{BelLub17}, who show an equivalence between certain more precise forms of Conjecture \ref{conj_class} and a conjecture on counting non-uniform lattices in semisimple Lie groups.
We have stated the conjecture here for $\ell$ prime, as for composite $\ell$ the result follows from the prime cases; see \S \ref{sec_composite}.

Conjecture \ref{conj_class} is known to be true in only one case: degree $n=2$ and the prime $\ell=2$; this follows from the genus theory of Gauss \cite{Gau01}, which shows that for a fundamental discriminant of absolute value $D_K$, we have $|\Cl_K[2]| \leq 2^{\om(D_K)-1}$, 
 where $\om(D_K)$ denotes the number of distinct prime divisors of $D_K$.  
 Since $\om(m) \ll \log m/ \log \log m$ (see e.g. \cite[Ch. 22 \S 10]{HarWri08}), it follows that for any quadratic field $K$, for every $\ep>0$,
 \beq\label{Gauss_2}
 |\Cl_K[2]| \ll_\ep D_K^\ep .
 \eeq

Recent progress has focused on verifying results toward Conjecture \ref{conj_class} by making small improvements on (\ref{trivial_bound}), either for all number fields of a fixed degree, or for ``almost all'' fields in a family of number fields of fixed degree. Results of the first type include \cite{HelVen06,Pie05,Pie06,EllVen07,BSTTTZ17}. Results of the second type, which can also be thought of as results  ``on average'' over a family of fields, include \cite{DavHei71,Bha05,HBP17, EPW17,PTBW20,FreWid17,FreWid18x}.  (For an overview of recent work, see Section \ref{sec_previous}.)

The apparent difficulty of verifying Conjecture \ref{conj_class}, for a fixed prime $\ell$ and all fields of a fixed degree $n$, in any case other than that due to Gauss, leads to a question: is the strong pointwise version of Conjecture \ref{conj_class} stated above too much to expect?

In this paper we describe several explicit  motivations for why we expect Conjecture \ref{conj_class} to hold. The key philosophy we employ is the ``method of moments,'' surveyed in \S \ref{sec_moments}: this is the principle that in certain  settings, if one can control arbitrarily high moments of a function, then one can deduce that there is not even one violation of a pointwise upper  bound by the function. 
In particular we show \emph{quantitatively}  how Conjecture \ref{conj_class} follows  from three other well-known conjectures in number theory: the Cohen-Lenstra heuristics; the discriminant multiplicity conjecture for counting number fields with fixed discriminant; and a generalization of the Malle conjecture for counting number fields with bounded discriminant. 
 Furthermore, we quantify implications of these conjectures for counting elliptic curves with fixed conductor, and we prove new unconditional counts for elliptic curves.
 
Many recent works on $\ell$-torsion in class groups have focused on average results over families of fields, and this type of work has been significantly constrained by the difficulty of counting number fields.  Our investigations in this paper confirm that counting fields is not merely a technical difficulty one encounters when proving average results, but in fact lies at the heart of understanding $\ell$-torsion in class groups.

We now give an overview of  this paper, organized according to four main  themes. 

\subsection{Cohen-Lenstra and Cohen-Martinet heuristics (\S \ref{sec_CLM_predictions})}\label{sec_CL_intro}
 First, we make quantitatively precise how Conjecture \ref{conj_class} follows from  the Cohen-Lenstra \cite{CohLen84} and Cohen-Martinet \cite{CohMar90} heuristics.
  In fact, we show  that even a collection of weaker upper bounds for moments, implied by the Cohen-Lenstra-Martinet heuristics, would imply Conjecture \ref{conj_class}. We frame our first result as a statement about a ``family''  $\Fscr$ of degree $n$ field extensions of $\Q$; we will at the moment leave the specifications of such a family rather vague, but this could indicate that we have fixed not only the degree and the Galois group of the Galois closure, but also the signature, certain local conditions, and possibly certain ramification restrictions (see \S \ref{sec_conventions} for precise conventions).
  
    Let $\Fscr$ denote a family of fields $K/\Q$ of degree $n$ and let $\Fscr(X)$ denote those with $0<D_K \leq X$, where $D_K = |\Disc K/\Q|$. 
 Consider for any fixed real number $k \geq 1$, integer $\ell \geq 1$, and real number $\al \geq 1$ the statement that  
 for all $X \geq 1$,
  \beq\label{CL_k_upper}
 \sum_{K \in \Fscr(X)} |\Cl_K[\ell]|^k \ll _{n,\ell,k,\al} |\Fscr(X)|^{\al}.
 \eeq

   \begin{thm}\label{thm_CLM_torsion}
 Let $n,\ell$ be fixed.  Suppose that for a fixed value $\al \geq 1$,  (\ref{CL_k_upper}) is known for all integers $k \geq 1$. Then for every $\ep>0$ we have $|\Cl_K[\ell]| \ll_{\ep} D_K^{\ep}$ for every field $K \in \Fscr$.
  \end{thm}
  
 This principle can be applied to any family $\Fscr$ for which appropriate moment bounds are known. Here we record that Theorem \ref{thm_CLM_torsion} immediately implies that for fixed $n,\ell$, uniform control of arbitrarily high $k$-th moments  is sufficient to deduce Conjecture \ref{conj_class} in its full strength:
  \begin{cor}\label{cor_CL}
Let $n\geq 1$ be fixed and let $\Fscr$ be taken to be the family of all degree $n$ extensions of $\Q$.
For each fixed integer $\ell \geq 1$, the conclusion of Conjecture \ref{conj_class} for $\ell$-torsion for all $K \in \Fscr$ would follow from knowing (\ref{CL_k_upper}) holds for that fixed $n,\ell$ and a certain fixed $\al \geq 1$,  for arbitrarily large  $k$.
  \end{cor}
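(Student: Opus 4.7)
The plan is to derive Corollary \ref{cor_CL} as a direct specialization of Theorem \ref{thm_CLM_torsion}. The corollary asserts that Conjecture \ref{conj_class} follows from knowledge of arbitrarily high moment bounds of the form (\ref{CL_k_upper}), which is an instance of the ``method of moments'' philosophy outlined in \S \ref{sec_moments}: if the $k$-th moment of $|\Cl_K[\ell]|$ over a family is controlled uniformly as $k\to\infty$, then no individual field in the family can violate a pointwise upper bound by too much.

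Concretely, I would proceed as follows. First, take $\Fscr$ to be the family of all degree $n$ extensions of $\Q$, which is a valid family in the sense of \S \ref{sec_conventions} (a family may be specified by fixing only the degree, with no further restrictions on Galois group, signature, or ramification). Next, observe that the hypothesis of Corollary \ref{cor_CL} is precisely that there exists some fixed $\al \geq 1$ and an infinite increasing sequence of values $k$ tending to infinity for which (\ref{CL_k_upper}) holds for this $\Fscr$, $n$, and $\ell$. This is exactly the hypothesis of Theorem \ref{thm_CLM_torsion}.

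Applying Theorem \ref{thm_CLM_torsion} then yields the existence, for every $\ep > 0$, of a constant $c_\ep = c(\ep,n,\ell,\al)$ such that
\[
|\Cl_K[\ell]| \leq c_\ep D_K^\ep \qquad \text{for every } K \in \Fscr.
\]
Since every degree $n$ extension of $\Q$ belongs to $\Fscr$, setting $c_{n,\ell,\ep} := c_\ep$ gives exactly the statement of Conjecture \ref{conj_class} for that fixed $n$ and $\ell$.

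I do not anticipate any substantive obstacle: the content of the corollary is essentially a repackaging of Theorem \ref{thm_CLM_torsion} specialized to the largest possible family. The only small matter to confirm is the compatibility of ``arbitrarily large $k$'' in the corollary's statement with the ``infinite increasing sequence of values $k$ that grows arbitrarily large'' in Theorem \ref{thm_CLM_torsion}, but these are manifestly the same condition. All genuine difficulty — in particular the use of the moment method to pass from averaged to pointwise information — is located in Theorem \ref{thm_CLM_torsion} itself, not in its deduction here.
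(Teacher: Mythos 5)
Your proposal is correct and matches the paper's own treatment: the corollary is stated as an immediate specialization of Theorem \ref{thm_CLM_torsion} to the family of all degree $n$ extensions of $\Q$, exactly as you argue. No further comment is needed.
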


  See Remark \ref{remark_FK} for an example of how to apply this moment approach to $4$-torsion in quadratic fields; while this case may be deduced by other means (\S \ref{sec_composite}), this is nevertheless an illustration of the principle.

\subsection{Discriminant multiplicity conjecture (\S \ref{sec_DMC})}
In the second theme, we will show quantitatively how Conjecture \ref{conj_class} follows from a well-known folk conjecture about counting number fields with fixed degree and fixed discriminant. 
By Hermite's finiteness theorem, for any fixed integer $D \geq 1$ there are finitely many extensions $K/\Q$ of degree $n$ with $D_K=D$ (see e.g. \cite[\S 4.1]{Ser97}). The question remains: how many? 
Duke \cite[\S 3]{Duk98} and Ellenberg and Venkatesh \cite[Conjecture 1.3]{EllVen05} make the following prediction, which we call the discriminant multiplicity conjecture:
 \begin{conjecture}[Discriminant Multiplicity Conjecture]\label{conj_DMC}
For each $n \geq 2$, for every $\ep>0$  
and for every integer $D \geq 1$, at most $\ll_{n,\ep} D^{\ep}$ fields $K/\Q$ of degree $n$ have $D_K=D$.
\end{conjecture}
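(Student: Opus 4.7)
My plan is to reduce the conjecture to a counting problem on the discriminant hypersurface and then attempt, using the geometry of numbers and the determinant method, to obtain the required $D^\ep$ savings.

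First I would parametrize: for every degree $n$ extension $K/\Q$ with $|\Disc K/\Q|=D$, Minkowski's theorem applied to a suitable translate of the lattice $\O_K\subset K\otimes_\Q\R$ produces an algebraic integer $\alpha\in\O_K$ with $K=\Q(\alpha)$ whose archimedean conjugates satisfy $|\alpha^{(i)}|\ll_n D^{1/(n-1)}$. The minimal polynomial $f_\alpha(x)\in\Z[x]$ then has coefficients bounded by $\ll_n D^{C_n}$ for an explicit $C_n$, and crucially $\disc(f_\alpha)=m^2 D$ with $m=[\O_K:\Z[\alpha]]$. Since the map $K\mapsto f_\alpha$ is finite-to-one, an upper bound on fields reduces to an upper bound on integer polynomials in a box of side $\ll_n D^{C_n}$ whose polynomial discriminant has the form $m^2 D$, summed over $m$ in a controlled range.

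Next I would stratify by the index $m$ and, for each $m$, count integer points in a box of side $H=D^{C_n}$ on the affine variety $\{\disc(f)=m^2 D\}\subset\A^n$. To obtain $D^\ep$ one needs to save essentially a full factor of $H^{n-1}$ over the naive volume bound; I would attempt this by combining (i) the $p$-adic determinant method of Heath-Brown and Salberger, which produces low-degree auxiliary hypersurfaces containing the integer points in a small box, with (ii) an induction on $n$ that uses the conjecture for smaller degrees to handle the loci where $f$ is close to reducible or where $m$ is unusually large.

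The hard part, and the reason this conjecture remains open, is precisely this point-counting step. The discriminant hypersurface is far from generic: it is singular along the non-primitive stratum, and the sharpest unconditional determinant-method bounds do not yet yield $H^\ep$ savings uniformly in $n$. Existing unconditional results---Schmidt, Ellenberg--Venkatesh \cite{EllVen05}, and improvements in small degree due to Bhargava and collaborators---achieve only bounds of shape $D^{c_n}$ with $c_n$ bounded away from $0$, and no method is known that pushes $c_n$ to an arbitrarily small $\ep$ for every fixed $n$. Any serious attack on the full conjecture will therefore have to identify a new source of saving specific to the geometry of the discriminant variety, and this is where I expect a genuinely new idea to be required.
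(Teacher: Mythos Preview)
The statement you are attempting to prove is a \emph{conjecture}, not a theorem: the paper explicitly records (see \S\ref{sec_DMC}) that Conjecture~\ref{conj_DMC} is known only for $n=2$ and remains open for every $n\geq 3$. The paper never claims to prove it; rather, it uses partial results of the form $\Dbf_n(\varpi)$ as \emph{hypotheses} (e.g.\ in Theorem~\ref{thm_fields_cl}) and records the best known unconditional values of $\varpi$ for small $n$.

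You yourself identify the gap accurately in your final paragraph: the determinant method on the discriminant hypersurface does not currently yield $D^{\ep}$ savings, and the best unconditional bounds are of the shape $D^{c_n}$ with $c_n$ bounded away from $0$. So your proposal is not a proof but a survey of an approach that is known to fall short, together with a correct diagnosis of where it fails. One further technical caution: your first step---producing a generating $\alpha\in\O_K$ with all conjugates $\ll_n D^{1/(n-1)}$ via Minkowski on a translate---is already delicate. Minkowski gives a short lattice vector, but a priori that vector need not generate $K$; controlling simultaneously the height and the generating property is tied to the Ruppert-type questions discussed in \S\ref{sec_review_avg}, and is itself not known uniformly. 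In any case, since the paper contains no proof of Conjecture~\ref{conj_DMC} to compare against, there is nothing to match here; your write-up would be more accurately framed as ``obstacles to a proof'' rather than a proof proposal.
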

Conjecture \ref{conj_DMC} is known to be true for quadratic fields ($n=2$) but is not known in full for any $n \geq 3$. 
See \S \ref{sec_DMC} for a summary of known results toward the discriminant multiplicity conjecture.

 Ellenberg and Venkatesh have noted  that Conjecture \ref{conj_DMC} implies Conjecture \ref{conj_class} (see \cite[p. 164]{EllVen05}). In Theorem \ref{thm_fields_cl}, we make this explicit by exhibiting quantitatively how even a partial result toward Conjecture \ref{conj_DMC} implies progress toward Conjecture \ref{conj_class}. To do so, we define two notations:
 \begin{property}[Property $\Dbf_n(\varpi)$]\label{property_D}
Let $n \geq 2$ be fixed. We say that property ${\bf D}_n(\varpi)$ holds if  for every $\ep>0$ 
and for every fixed integer $D>1$, at most $\ll_{n,\ep} D^{\varpi+\ep}$ fields $K/\Q$ of degree $n$ have $D_K=D$.
\end{property} 

  \begin{property}[Property $\Cbf_{n,\ell}(\Del)$]\label{property_C}
Let $n, \ell \geq 1$ be fixed. We say that property ${\bf C}_{n,\ell}(\Del)$ holds if for every $\ep>0$ 
and for all fields $K/\Q$ of degree $n$, we have
 $|\Cl_{K}[\ell]| \ll_{n,\ell,\ep} D_K^{\Del+\ep }.$
\end{property} 

Now we make the relationship between $\Dbf_n(\varpi)$ and the $\ell$-torsion conjecture quantitatively precise.

   \begin{thm}\label{thm_fields_cl}
Fix an integer $n \geq 2$ and any prime $\ell$. If Property ${\bf D}_{n\ell} (\Del)$ holds for a real number $\Del \geq 0$, then Property ${\bf C}_{n,\ell}(\ell \Del)$ holds. 
 \end{thm}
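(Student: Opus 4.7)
The plan is to translate the bound on $|\Cl_K[\ell]|$ into a bound on the number of unramified cyclic degree-$\ell$ extensions $L/K$ via class field theory, observe that each such $L$ is a degree-$n\ell$ extension of $\Q$ with discriminant $D_K^\ell$, and then apply Property $\Dbf_{n\ell}(\Del)$.

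First, for $\ell$ prime, class field theory gives a bijection between unramified cyclic degree-$\ell$ extensions $L/K$ inside $\Qbar$ and index-$\ell$ subgroups of $\Cl_K$, i.e., hyperplanes in the $\F_\ell$-vector space $\Cl_K/\ell\Cl_K$ of dimension $r$, where $|\Cl_K[\ell]|=\ell^r$. The number of such extensions is $(\ell^r-1)/(\ell-1)=1+\ell+\cdots+\ell^{r-1}\geq |\Cl_K[\ell]|/\ell$ whenever $r\geq 1$ (the case $r=0$ renders the theorem trivial, since $D_K\geq 1$). For each such $L$, the tower formula $\Disc(L/\Q)=\Nm_{K/\Q}(\Disc(L/K))\cdot\Disc(K/\Q)^{[L:K]}$ combined with $\Disc(L/K)=\Ocal_K$ (as $L/K$ is unramified at all finite places) yields $D_L=D_K^\ell$, and $[L:\Q]=n\ell$.

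Second, I would check that the natural map from unramified cyclic $\ell$-extensions $L\subset\Qbar$ of $K$ to $\Q$-isomorphism classes of degree-$n\ell$ number fields has fibers of size at most $n\ell$. Indeed, the $\Gal(\Qbar/\Q)$-orbit of any $L\subset\Qbar$ consists of $[L:\Q]/|\Aut(L/\Q)|\leq n\ell$ distinct subfields, so any single $\Q$-isomorphism class of degree-$n\ell$ field is realized by at most $n\ell$ subfields of $\Qbar$, and in particular by at most $n\ell$ such subfields containing the fixed $K$.

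Combining these two steps gives
\[
\frac{|\Cl_K[\ell]|}{\ell}\;\leq\;\frac{|\Cl_K[\ell]|-1}{\ell-1}\;\leq\; n\ell\cdot\#\{L/\Q:[L:\Q]=n\ell,\ D_L=D_K^\ell\},
\]
and applying Property $\Dbf_{n\ell}(\Del)$ with any $\ep>0$ bounds the right-hand side by $n\ell\cdot C_{n\ell,\ep}(D_K^\ell)^{\Del+\ep}=n\ell\cdot C_{n\ell,\ep}D_K^{\ell\Del+\ell\ep}$. Rescaling $\ell\ep$ to a new $\ep'>0$ and absorbing the factor $n\ell^2$ into a new constant yields
\[
|\Cl_K[\ell]|\;\leq\; C_{n,\ell,\ep'}\, D_K^{\ell\Del+\ep'},
\]
which is precisely Property $\Cbf_{n,\ell}(\ell\Del)$. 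There is no serious obstacle beyond careful bookkeeping; the only point requiring attention is the fiber bound in the second step, where one confirms that counting fields up to $\Q$-isomorphism versus as subfields of $\Qbar$ costs only a factor depending on $n$ and $\ell$.
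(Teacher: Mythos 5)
Your proposal is correct and follows essentially the same route as the paper: use class field theory to produce the $(\ell^r-1)/(\ell-1)$ unramified cyclic degree-$\ell$ extensions of $K$ inside the Hilbert class field, note each is a degree-$n\ell$ extension of $\Q$ with discriminant exactly $D_K^\ell$, and invoke Property $\Dbf_{n\ell}(\Del)$. Your additional fiber count relating subfields of $\overline{\Q}$ to isomorphism classes is harmless extra bookkeeping that the paper omits, since its conventions count fields inside a fixed $\overline{\Q}$ and the discrepancy is in any case only a factor depending on $n\ell$.
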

   Note that the exponent we gain for the $\ell$-torsion bound is significantly weaker (that is, larger) than the exponent we assume for counting fields with fixed discriminant; in this formulation, as $\ell$ increases, we need to be increasingly good at counting the relevant fields, to preserve the same quality of bound on $\ell$-torsion. 
Theorem \ref{thm_fields_cl} has an immediate corollary:
  \begin{cor}
Let $n \geq 1$ be fixed. Then for a fixed prime $\ell$, the statement of Conjecture \ref{conj_class} holds for all fields of degree $n$, if  Conjecture \ref{conj_DMC} is known for degree $n\ell$.
  \end{cor}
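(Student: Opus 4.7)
The plan is to derive the corollary as an immediate specialization of Theorem \ref{thm_fields_cl} by taking $\Delta = 0$. I would first observe that Conjecture \ref{conj_DMC} for degree $n\ell$ is, by definition, exactly Property $\mathbf{D}_{n\ell}(0)$: the conjecture asserts that for every $\ep>0$ the number of degree $n\ell$ fields with fixed discriminant $D$ is $\leq C_{n\ell,\ep} D^{\ep}$, which is the statement of Property $\mathbf{D}_{n\ell}(\varpi)$ at $\varpi=0$.

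Next, I would invoke Theorem \ref{thm_fields_cl} with the choice $\Delta = 0$, which is permitted since the theorem allows any real $\Delta \geq 0$. Its conclusion is Property $\mathbf{C}_{n,\ell}(\ell \cdot 0) = \mathbf{C}_{n,\ell}(0)$. Unpacking this property gives: for every $\ep>0$ there exists a constant $C_{n,\ell,\ep}$ such that
\[
|\Cl_K[\ell]| \leq C_{n,\ell,\ep} D_K^{\ep}
\]
for every field $K/\Q$ of degree $n$. This is precisely the conclusion of Conjecture \ref{conj_class} for degree $n$ and prime $\ell$, completing the derivation.

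There is no real obstacle here beyond correctly matching definitions: the corollary is a direct specialization of Theorem \ref{thm_fields_cl}, and all of the substantive work is contained in the proof of that theorem. The only subtlety worth noting is that the loss factor $\ell$ in passing from $\Delta$ to $\ell\Delta$ is harmless when $\Delta=0$, which is precisely why the strongest form of the discriminant multiplicity conjecture translates cleanly into the strongest form of the $\ell$-torsion conjecture.
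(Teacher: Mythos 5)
Your proposal is correct and matches the paper's treatment: the paper presents this corollary as an immediate consequence of Theorem \ref{thm_fields_cl}, obtained exactly by identifying Conjecture \ref{conj_DMC} for degree $n\ell$ with Property $\mathbf{D}_{n\ell}(0)$ and specializing to $\Delta = 0$ to get Property $\mathbf{C}_{n,\ell}(0)$, i.e.\ Conjecture \ref{conj_class} for degree $n$ and the prime $\ell$. Nothing further is needed.
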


\begin{remark}\label{remark_deg_n} We highlight here a difficulty of this approach; if one fixes $n$ and wants to use this route to prove the $\ell$-torsion conjecture for all $\ell$ for fields of degree $n$, then one would need to prove the discriminant multiplicity conjecture for infinitely many degrees. The recent work of the authors in \cite{PTBW20} to some extent eliminates this inefficiency; in that work, once the degree $n$ is fixed, (weak) results are proved for $\ell$-torsion 
for all $\ell$   by proving weak results toward the discriminant multiplicity conjecture only for degree $n$; see \S \ref{sec_review_avg} for further remarks.
\end{remark}

\subsection{Generalized Malle Conjecture (\S \ref{sec_gen_Malle})}
In the third theme, we examine how Conjecture \ref{conj_DMC}, and hence Conjecture \ref{conj_class}, follows from a generalized Malle conjecture for counting number fields ordered via other invariants than the discriminant.  
This observation is due to Ellenberg and Venkatesh \cite[Prop. 4.8]{EllVen05}, and we state it here only loosely (see \S \ref{sec_gen_Malle} for a precise statement):
 
 \begin{letterthm}[{\cite[Prop. 4.8]{EllVen05}}]\label{T:EVA}
If a generalized Malle conjecture (see (\ref{Malle_EV})) holds for all groups $G$, then Conjecture \ref{conj_DMC} holds, and hence Conjecture \ref{conj_class} holds.
 \end{letterthm}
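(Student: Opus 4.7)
The plan has two steps corresponding to the two implications. The second---that Conjecture \ref{conj_DMC} implies Conjecture \ref{conj_class}---follows immediately from Theorem \ref{thm_fields_cl} already proved in the excerpt: Conjecture \ref{conj_DMC} is exactly Property $\mathbf{D}_{m}(0)$ for every $m \geq 2$, and applying Theorem \ref{thm_fields_cl} with $m = n\ell$ and $\Delta = 0$ yields Property $\mathbf{C}_{n,\ell}(0)$ for every $n$ and every prime $\ell$, which is the $\ell$-torsion conjecture for primes (and hence for all $\ell$ by \S \ref{sec_composite}). Thus the entire content of the theorem lies in the first implication: generalized Malle implies discriminant multiplicity.

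To prove this, fix $n$, a discriminant $D \geq 1$, and any $\epsilon > 0$. A degree-$n$ field $K/\Q$ has Galois closure $\tilde K/\Q$ whose Galois group sits inside $S_n$ as one of finitely many transitive subgroups $G$, so it suffices to bound, for each such $G$, the number of $K$ with $\Gal(\tilde K/\Q) = G$ and $D_K = D$. Following Ellenberg--Venkatesh, the strategy is to count these fields not by their discriminant but by an auxiliary invariant, furnished by the generalized Malle conjecture (\ref{Malle_EV}), that happens to be extremely small on any field with fixed discriminant $D$.

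Concretely, one selects a weighting $w$ on nontrivial conjugacy classes of $G$ for which each ramified rational prime $p$ contributes to the invariant $I_w(K)$ only a factor determined by the ramification type at $p$, not by the size of $p$ itself. For such $w$ one has $I_w(K) \leq C_{G,w}^{\omega(D_K)}$, where $\omega(\cdot)$ counts distinct prime divisors; combined with the standard estimate $\omega(D) \ll \log D/\log\log D$, this gives $I_w(K) \leq D^{o(1)}$ for every $K$ with $D_K = D$. The generalized Malle conjecture applied to $G$ with this weight bounds the number of fields with $I_w(K) \leq X$ by $\ll_{G,w} X^{a(G,w)} (\log X)^{b(G,w)}$; substituting $X = D^{o(1)}$ yields $\ll_{n,\epsilon} D^{\epsilon}$ fields of degree $n$ with $D_K = D$, which is Conjecture \ref{conj_DMC}. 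Summation over the finitely many choices of $G \subseteq S_n$ costs only a constant depending on $n$. The chief obstacle is verifying that the precise formulation of generalized Malle recorded in \S \ref{sec_gen_Malle} accommodates invariants where each ramified prime contributes a uniformly bounded factor---that is, that the weight $w$ is permitted to suppress the arithmetic size of $p$. Once such flexibility is in hand, the argument is entirely elementary.
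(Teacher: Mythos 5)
Your second step (Conjecture \ref{conj_DMC} $\Rightarrow$ Conjecture \ref{conj_class} via Theorem \ref{thm_fields_cl} and \S \ref{sec_composite}) is fine and matches the paper. The gap is in the main implication, and the "chief obstacle" you flag at the end is in fact fatal rather than a technical point to be checked. The generalized Malle conjecture (\ref{Malle_EV}) is a statement about $f$-discriminants $D_f(L) = \prod_{p \nmid |G|} p^{f(c_p)}$, where $f$ is a rational class function with $f(c) \geq 1$ on every nontrivial class. So every ramified prime $p \nmid |G|$ contributes a factor of at least $p$ to the invariant: the weight is \emph{not} permitted to suppress the arithmetic size of $p$, and the smallest admissible invariant (take $f \equiv 1$) is essentially the radical of the prime-to-$|G|$ part of $D_K$, which for squarefree $D$ equals $D$ up to the $|G|$-part. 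Hence your claim $I_w(K) \leq C_{G,w}^{\omega(D_K)} = D^{o(1)}$ has no counterpart within (\ref{Malle_EV}), and applying (\ref{Malle_EV}) to $G$ itself with $f \equiv 1$ only bounds the number of $G$-fields of $f$-discriminant $\leq D$ by $\ll D^{1+\ep}$, which says nothing useful about the multiplicity at a single discriminant. Worse, a Malle-type bound for the invariant you propose would simply be false: an invariant in which each ramified prime contributes a bounded factor takes a fixed bounded value on, say, all quadratic fields ramified at exactly one (arbitrarily large) prime, so the set of fields with $I_w \leq X$ is infinite for fixed $X$ and no bound of the form $\ll X^{a}$ can hold.

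The idea you are missing is the one the paper highlights as the point of Ellenberg--Venkatesh's argument: convert the $k$-th moment of the multiplicity function into an \emph{average} count for the larger group $G^k$. If $m(D)$ Galois $G$-fields have discriminant $D$, then composita of $k$ distinct ones give $\binom{m(D)}{k} \gg_k m(D)^k$ fields with Galois group $G^k$, and with the class function $f \equiv 1$ on $G^k$ each such compositum has $f$-discriminant at most $D$ (the exponent of each ramified prime is capped at $1$, regardless of how ramification accumulates in the compositum), while $a_{G^k}(f) = 1$ \emph{independently of $k$}. Applying (\ref{Malle_EV}) to $G^k$ then gives $m(D)^k \ll_{k,\ep} D^{1+\ep}$ for every $k$, and letting $k \to \infty$ forces $m(D) \ll_\ep D^{\ep}$, i.e.\ Property $\Dbf_n(0)$. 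Without passing to $G^k$ (or some equivalent device for accessing arbitrarily high moments), the single-field count you propose cannot be extracted from (\ref{Malle_EV}).
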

We highlight this theorem of Ellenberg and Venkatesh because it uses not only the method of moments  but also a clever  strategy of reinterpreting a $k$-th moment of an object  as an average of $k$-fold objects, so that one can (conjecturally) obtain bounds for arbitrarily high moments even if one only  assumes bounds for averages (i.e. first moments). (This leads to speculation on whether it could be advantageous to reinterpret $k$-th moments of $\ell$-torsion as averages of $\ell'$-torsion over certain $k'$-fold objects, but any such precise formulation is so far elusive.)  
\begin{remark}\label{remark_Alberts}
Note also that Alberts \cite{Alb18} has shown that Conjecture \ref{conj_class} implies the generalized Malle conjecture that we give below in (\ref{Malle_EV}), providing a converse to Theorem~\ref{T:EVA}.
\end{remark}

\subsection{Counting elliptic curves with fixed conductor (\S \ref{sec_EC})}
In our fourth theme, we observe that for every $k \geq 1$, a $k$-th moment estimate for counting elliptic curves with fixed conductor can be deduced from any of the conjectures mentioned above. In addition, unconditionally, we derive improved bounds for counting elliptic curves with fixed conductor.

To be precise, given a positive integer $q$, let $E(q)$ denote the number of isogeny classes of elliptic curves over $\Q$ with conductor $q$. Brumer and Silverman   have put forward the following conjecture:

\begin{conjecture}[{\cite[Conj. 3]{BruSil96}}]\label{conj_EC}
For every $q \geq 1$, for all $\ep>0$,
\[ E(q) \ll_\ep q^{\ep}. \]
\end{conjecture}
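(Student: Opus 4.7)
The plan is to deduce Conjecture \ref{conj_EC} from Conjecture \ref{conj_class} via the classical Brumer--Silverman reduction connecting elliptic curves with fixed conductor to $3$-torsion in class groups of auxiliary quadratic fields. Combined with the themes of \S \ref{sec_CL_intro}--\S \ref{sec_gen_Malle}, this route also produces $k$-th moment analogues and unconditional partial results, as the paper promises.

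First I would reduce to counting $j$-invariants. An isogeny class of conductor $q$ is determined up to quadratic twist by its $j$-invariant, and a twist by $d$ changes the conductor only at primes dividing $d$; hence twists with the same conductor $q$ form a set of size $\ll 2^{\omega(q)} \ll_\ep q^\ep$. So it suffices to bound the number of rational $j$-invariants arising from curves over $\Q$ of conductor $q$. For each such curve choose a minimal Weierstrass model with invariants $c_4,c_6\in\Z$ and minimal discriminant $\Delta$, satisfying
\[
c_4^3 - c_6^2 = 1728\,\Delta, \qquad \rad(\Delta)\mid q.
\]
Since each prime of $q$ appears in $\Delta$ with bounded exponent, $\Delta$ lies in a set of size $\ll_\ep q^\ep$, and we are reduced to bounding the integer solutions of the Mordell-type equation $X^3 - Y^2 = D$ for each admissible $D$ supported on the primes of $q$.

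The heart of the argument is the next step: bound integer solutions of $X^3 - Y^2 = D$ by $3$-torsion. Factoring over $\O_K$ with $K = \Q(\sqrt{-D})$, one writes
\[
X^3 = (Y+\sqrt{-D})(Y-\sqrt{-D}),
\]
and observes that away from the (finitely many) primes dividing $6D$ the two factors are coprime, so each is the cube of an ideal of $\O_K$. This associates to every primitive solution a class in $\Cl_K[3]$, with fibers of size $\ll_\ep q^\ep$ once one accounts for units, the controlled ramification at $2$ and $3$, and non-maximal orders. The conclusion is a bound
\[
\#\{(X,Y)\in\Z^2 : X^3-Y^2=D\} \ll_\ep q^\ep\cdot |\Cl_K[3]|.
\]
Applying Conjecture \ref{conj_class} to $\ell=3$ and the quadratic field $K$, together with the trivial bound $D_K\ll q^{O(1)}$, gives $|\Cl_K[3]|\ll_\ep q^\ep$. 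Summing over the $\ll_\ep q^\ep$ choices of $D$ and over the $\ll_\ep q^\ep$ twists yields $E(q)\ll_\ep q^\ep$, as desired.

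The main obstacle is the third step: making the passage from integer points on the Mordell curve to $3$-torsion classes quantitatively sharp, while losing only a factor $q^\ep$ to the arithmetic of $q$ (non-fundamental discriminants, the primes $2$ and $3$, imprimitive solutions, and distribution across genera). Once this is in hand, the same framework turns $k$-th moment bounds for $|\Cl_K[3]|$, supplied conditionally by the Cohen--Lenstra heuristics via Theorem \ref{thm_CLM_torsion} or by the discriminant multiplicity bound via Theorem \ref{thm_fields_cl}, into corresponding $k$-th moment bounds for $E(q)$; and unconditional nontrivial bounds on $\Cl_K[3]$ (e.g.\ those following from \cite{HelVen06,EllVen07,BSTTTZ17}) translate into unconditional improvements over the trivial estimate for $E(q)$.
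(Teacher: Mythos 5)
The statement you are asked about is Conjecture \ref{conj_EC} itself: it is an open conjecture of Brumer and Silverman, and the paper does not prove it --- it only records it, recalls the reduction of $E(q)$ to $3$-torsion in quadratic fields (see (\ref{BruSil_ptwise}) and its sharpening (\ref{HelVen_ptwise})), and proves conditional or partial statements toward it (the current record $E(q)\ll_\ep q^{0.1688...+\ep}$, Theorem \ref{thm_EC_6} assuming sextic discriminant multiplicity, and the moment transfer of Theorem \ref{thm_EC}). Your argument is explicitly conditional on Conjecture \ref{conj_class} for $\ell=3$ and quadratic fields, which is exactly as open as the statement you are trying to prove: the only known case of Conjecture \ref{conj_class} is $n=2$, $\ell=2$, and the best known bound for $3$-torsion is $\Cbf_{2,3}(1/3)$. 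So what you have outlined is the (correct, and already quantified in \S\ref{sec_EC}) implication ``$\Cbf_{2,3}(0)$ implies Conjecture \ref{conj_EC},'' not a proof of Conjecture \ref{conj_EC}; as a proof of the stated conjecture it has an irreparable gap.

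There is also a concrete technical error in your reduction, independent of the conditionality. You claim that since ``each prime of $q$ appears in $\Delta$ with bounded exponent,'' the minimal discriminant $\Delta$ ranges over $\ll_\ep q^\ep$ values, reducing the problem to integral points on finitely many Mordell curves $X^3-Y^2=D$. This is false: at a prime of multiplicative reduction the conductor exponent is $1$ while $v_p(\Delta)$ can be arbitrarily large (Kodaira type $I_n$ for any $n$), so $\Delta$ is not bounded by any fixed power of $q$ without invoking Szpiro's conjecture --- yet another open statement. This is precisely why Brumer and Silverman instead write $1728\Delta_E=ad^6$ with $a$ sixth-power free as in (\ref{disc_ad}), so that the unbounded part of the discriminant is absorbed into $d^6$, and then count \emph{$S_q$-integral} points on the curves $\Escr_a: Y^3=X^2+a$; it is the $S_q$-integral point count (not the integral one) that is dominated by $\ll_\ep q^\ep\,|\Cl_{K_a}[3]|$, with the $S$-units and the finitely many bad primes costing only $q^\ep$ because $\#S_q\leq \omega(q)+2$. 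If you repair your step 2 along these lines, you recover exactly the reduction (\ref{BruSil_ptwise})--(\ref{HelVen_ptwise}) used in the paper, but the conclusion remains a conditional implication, not a proof of Conjecture \ref{conj_EC}.
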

(For clarity, we note that an equivalently strong conjecture is expected to hold if we instead count the number $E'(q)$ of isomorphism classes of elliptic curves over $\Q$ with conductor $q$; the statement of our results below will hold for both $E(q)$ and $E'(q)$, up to a constant factor.)
Brumer and Silverman proved the first result toward Conjecture \ref{conj_EC}, namely $E(q) \ll_\ep q^{1/2+\ep}$, by dominating this count by $3$-torsion in certain quadratic fields; by combining aspects of  \cite[Thm. 4.5]{HelVen06} and  \cite{EllVen07} this can now be improved to 
\beq\label{Eq_EV}
E(q) \ll_\ep q^{2 \be/ 3 \log 3 + \ep} \ll_\ep q^{0.1688...+\ep} 
\eeq
for an explicit constant $\be = 0.2782...$;
 see \S \ref{sec_EC} for an explication of this.
 We observe in Theorem \ref{thm_EC_6} that Conjecture \ref{conj_EC} would follow from counting sextic fields of fixed discriminant. But our main focus is on a moment statement related to Conjecture \ref{conj_EC}.

Brumer and Silverman's argument dominates $E(q)$ pointwise by a sup-norm of $|\Cl_{K}[3]|$ over quadratic fields; this comparison has been sharpened further in \cite{HelVen06}, still using a supremum over a  set of certain quadratic fields. Thus proving an upper bound for $3$-torsion for almost all such fields (or  for an average, or for a fixed moment) does not immediately provide an analogous result for $E(q)$ (see  Remark \ref{remark_EC}). 
The content of Duke and Kowalski \cite[Prop. 1]{DukKow00} is that a refined argument shows that nevertheless, average counts for $E(q)$ are dominated by average counts for $3$-torsion in quadratic fields. (See \cite{FNT92} for an analogue when the curves are ordered by discriminant rather than conductor.) 

An upper bound for an average $\sum E(q)$ yields an upper bound for the number of $q$ for which $E(q)$ can violate a certain pointwise upper bound; a comparable upper bound for a higher moment $\sum E(q)^k$ sharpens the upper  bound on the number of  possible exceptions. 
This motivates us to consider higher moments.
We build on \cite{BruSil96,DukKow00,HelVen06} to prove that  the $k$-th moment of $3$-torsion in quadratic fields dominates 
the $(\ga k)$-th moment of $E(q)$, for a certain numerical constant $\ga \approx 2$ specified  precisely below.
\begin{thm}\label{thm_EC}
Let $\Fscr_2(X)$ denote the family of quadratic fields  with discriminant of absolute value at most $X$.
Let $\ga = \log 3 / (2\be) = 1.9745...$ be the numerical constant determined by $\be= \be(0) = 0.2782...$ as defined in \cite[Thm. 3.8]{HelVen06}.
Let $k \geq 1/\ga =0.5064...$ be a fixed real number, and assume that there exists $\Theta_k \geq 1$ such that for every $\ep>0$ 
and for every $X \geq 1$, 
\beq\label{assume_k_moment_Cl3}
 \sum_{K \in \Fscr_2(X)} |\Cl_K[3]|^k  \ll_{k,\ep} X^{\Theta_k + \ep}.
 \eeq
Then for that $k$, for every $\ep>0$  
and  for all $Q \geq 1$,
\beq\label{get_2k_moment_Eq}
 \sum_{q \leq Q} E(q)^{\ga k}  \ll_{k,\ep}Q^{\Theta_k+\ep}.
 \eeq

\end{thm}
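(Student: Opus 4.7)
The approach follows the strategy of Brumer--Silverman \cite{BruSil96} and Duke--Kowalski \cite{DukKow00} for passing from pointwise inequalities between $E(q)$ and $3$-torsion in quadratic class groups to average statements, then extending this to arbitrary moments via H\"older's inequality. The role of the constant $\ga$ is precisely to make the bookkeeping work.

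The first task is to establish the correct pointwise inequality. Each isogeny class of elliptic curves over $\Q$ of conductor $q$ is represented (after an appropriate quadratic twist) by an integral point on a Mordell curve $y^2 = x^3 + D$ whose $D$ lies in an explicit set $\Sscr(q)$ of cardinality $\ll q^{\ep}$, built from divisors of a fixed power of $q$. Applying \cite[Thm.~3.8]{HelVen06} to control the number of such integral points, together with the standard relationship between $3$-torsion of $K_D = \Q(\sqrt{-3D})$ and cubic fields ramifying inside a prescribed set of primes (as exploited in \cite{BruSil96,HelVen06}), yields
\[
E(q) \ll q^{\ep} \sum_{D \in \Sscr(q)} |\Cl_{K_D}[3]|^{1/\ga},
\]
where the exponent $1/\ga = 2\be/\log 3$ is exactly the one delivered by the HV integral-point bound.

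Next, since $\ga k \geq 1$ by hypothesis, the power-mean inequality gives
\[
E(q)^{\ga k} \ll q^{\ep} (\#\Sscr(q))^{\ga k - 1} \sum_{D \in \Sscr(q)} |\Cl_{K_D}[3]|^{k} \ll q^{\ep} \sum_{D \in \Sscr(q)} |\Cl_{K_D}[3]|^{k},
\]
after absorbing $(\#\Sscr(q))^{\ga k - 1} \ll q^{\ep}$. Summing over $q \leq Q$ and interchanging the order of summation produces
\[
\sum_{q \leq Q} E(q)^{\ga k} \ll Q^{\ep} \sum_D |\Cl_{K_D}[3]|^{k} \cdot \#\{q \leq Q : D \in \Sscr(q)\}.
\]
Two inputs then finish the job: (i) for each fixed $D$, the number of conductors $q \leq Q$ with $D \in \Sscr(q)$ is $\ll Q^{\ep}$, since such $q$ are forced to be supported on a prescribed finite set of primes determined by $D$; and (ii) the discriminant $D_{K_D}$ for $D \in \Sscr(q)$ satisfies $D_{K_D} \ll q$, so that the relevant quadratic fields lie in $\Fscr_2(cQ)$ for some absolute constant $c$. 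These combine to give
\[
\sum_{q \leq Q} E(q)^{\ga k} \ll Q^{\ep} \sum_{K \in \Fscr_2(cQ)} |\Cl_K[3]|^{k},
\]
and applying the assumed bound (\ref{assume_k_moment_Cl3}) at $X = cQ$ yields the conclusion.

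The main technical obstacle sits in the final step: one must carefully verify both the fiber-multiplicity bound on the map $q \mapsto \Sscr(q)$ and the discriminant bound $D_{K_D} \ll q$ at the right scale, using the explicit Brumer--Silverman construction. Much of this content is already latent in \cite{BruSil96,DukKow00,HelVen06}; the novelty here is the organizational observation that the exponents $1/\ga$ (from the pointwise HV bound) and $\ga k$ (used in the power mean) are perfectly matched, which is why the hypothesis requires precisely $k \geq 1/\ga$ and no further loss appears in the transfer from moments of $|\Cl_K[3]|$ to moments of $E(q)$.
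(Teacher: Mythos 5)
Your argument follows the paper's route through the pointwise bound $E(q) \ll_\ep q^\ep \sum_{D} |\Cl_{K_D}[3]|^{2\be/\log 3}$ and the H\"older (power-mean) step using $\ga k \geq 1$, but the final interchange of summation has a genuine gap: your fiber-multiplicity claim (i) is false. After passing to square-free kernels, membership $D \in \mathscr{S}(q)$ is essentially a divisibility condition of the shape $D \mid 6q$, so for a fixed $D$ the number of conductors $q \leq Q$ with $D \in \mathscr{S}(q)$ is of order $Q/|D|$, not $\ll Q^\ep$. Concretely, the quadratic twists $E_t$ of a single curve all produce the same sixth-power-free value $D$ (twisting multiplies the discriminant by $t^6$), while their conductors acquire the primes dividing $t$; this already gives $\gg Q^{1/2}$ conductors $q \leq Q$ sharing one value of $D$, so $q$ is certainly not supported on a finite set of primes determined by $D$. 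If you instead insert the true multiplicity and bound it trivially by $Q$, you only obtain $\ll_\ep Q^{1+\Theta_k+\ep}$, which falls short of (\ref{get_2k_moment_Eq}).

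The missing ingredient — and the way the paper closes the argument — is to retain the factor $Q/|D|$ and play it off against the hypothesis (\ref{assume_k_moment_Cl3}) by partial summation. Setting $\al_n = |\Cl_{K_n}[3]|^{k}$ for the quadratic field attached to $n$, one has
\[
\sum_{q \leq Q} \sum_{n \mid q} \al_n \;=\; \sum_{n \leq Q} \al_n \Bigl\lfloor \frac{Q}{n} \Bigr\rfloor \;\leq\; Q \sum_{n \leq Q} \frac{\al_n}{n},
\]
and partial summation together with $\sum_{n \leq t} \al_n \ll_\ep t^{\Theta_k+\ep}$ and the observation that $\Theta_k \geq 1$ (since $|\Fscr_2(X)| \gg X$) yields $\ll_\ep Q^{\Theta_k + \ep}$, with only a harmless logarithm in the boundary case. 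With this replacement for your step (i), the remainder of your argument — the Helfgott--Venkatesh exponent $1/\ga$, the multiplicity bounds $\ll_\ep q^\ep$ for curves per integral point and for sixth-power-free values per square-free kernel, and the bound $|D| \ll q$ placing the fields in $\Fscr_2(cQ)$ — matches the paper's proof.
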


Notice that the conclusion for $E(q)$ is stronger  than the assumption for $|\Cl_K[3]|$, since the power   in the moment   has   nearly doubled in size. In particular,   (\ref{assume_k_moment_Cl3}) shows that 
\[ \# \{ K \in \Fscr_2(X) : X/2 < D_K \leq X,  |\Cl_K[3]| \geq D_K^\varpi\} \ll X^{\Theta_k - k \varpi },\]
while we see from (\ref{get_2k_moment_Eq}) that 
\[ \# \{ Q/2 \leq q \leq Q :  E(q) \geq q^\varpi\} \ll Q^{\Theta_k - k \ga \varpi }.\]

By combining Theorem \ref{thm_EC} with moment bounds of the form (\ref{assume_k_moment_Cl3}) for $3$-torsion in quadratic fields due to Heath-Brown and the first author in \cite[Cor. 1.4]{HBP17} we immediately obtain new unconditional bounds for moments of $E(q)$:
\begin{cor}\label{cor_EC}
For each $q \geq 1$, let $E(q)$ denote the number of isogeny classes of elliptic curves over $\Q$ with conductor $q$. Let  $\ga = 1.9745...$ be the numerical constant as above. Then for every $Q \geq 1$, for all $\ep>0$,
\[ \sum_{q \leq Q} E(q)^{2 \ga }  \ll_\ep Q^{23/18+\ep}.\]
More generally, for all real $1 \leq k \leq 4$,
\[ \sum_{q \leq Q} E(q)^{\ga k}  \ll_\ep Q^{(5k+13)/18+\ep},\]
and for all real $k \geq 4$,  
\[ \sum_{q \leq Q} E(q)^{\ga k}  \ll_\ep Q^{(2k+3)/6+\ep}.\]
\end{cor}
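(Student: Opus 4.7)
The plan is to deduce Corollary \ref{cor_EC} by combining Theorem \ref{thm_EC} with existing moment bounds for $|\Cl_K[3]|$ over quadratic fields, so that the ``doubling'' factor $\ga \approx 2$ coming from Theorem \ref{thm_EC} is the only nontrivial input beyond what is already in the literature.

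First I would invoke \cite[Cor. 1.4]{HBP17} to supply, for each real $k \geq 1$, an inequality of the form (\ref{assume_k_moment_Cl3}) with an explicit exponent $\Theta_k$. In the regime $1 \leq k \leq 4$ this yields $\Theta_k = (5k+13)/18$, and for $k \geq 4$ it yields $\Theta_k = (2k+3)/6$. A quick sanity check: at $k=4$ the two formulas both give $11/6$, so the piecewise exponent is continuous at the joint, and at $k=2$ we obtain $\Theta_2 = 23/18$, matching the distinguished exponent highlighted in the corollary statement.

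Second I would feed these moment bounds into Theorem \ref{thm_EC}. Every $k \geq 1$ under consideration satisfies $k \geq 1/\ga = 0.5064\ldots$, so the hypothesis on $k$ in Theorem \ref{thm_EC} is met, and the conclusion (\ref{get_2k_moment_Eq}) transfers the exponent $\Theta_k$ unchanged from the quadratic-field moment to the moment of $E(q)^{\ga k}$ over $q \leq Q$. Specializing to $k=2$ immediately gives the first displayed bound $\sum_{q \leq Q} E(q)^{2\ga} \ll_\ep Q^{23/18+\ep}$, and the two ranges $1 \leq k \leq 4$ and $k \geq 4$ produce the remaining two displayed bounds verbatim.

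There is no genuine obstacle in this deduction: Theorem \ref{thm_EC} was designed precisely to repackage $k$-th moment bounds for $3$-torsion in quadratic fields as $(\ga k)$-th moment bounds for $E(q)$. The only care required is to check that the dependence of the constants $c_{k,\ep}$ in (\ref{assume_k_moment_Cl3}) on $k$ and $\ep$ is admissible for each fixed $k$, which is immediate from the statement of \cite[Cor. 1.4]{HBP17}. The substantive content, of course, lies not in this corollary but in Theorem \ref{thm_EC}: it is exactly the near-doubling of the exponent on $E(q)$ produced there that converts known moment technology for $3$-torsion into strictly stronger moment technology for counts of elliptic curves of fixed conductor.
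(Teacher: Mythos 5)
Your deduction is correct and is exactly the paper's own argument: the authors prove the corollary in one line by feeding the exponents $\Theta_k$ of \cite[Cor. 1.4]{HBP17} (namely $(5k+13)/18$ for $1\leq k\leq 4$ and $(2k+3)/6$ for $k\geq 4$) into Theorem \ref{thm_EC}. Your sanity checks at $k=2$ and at the joint $k=4$, and the verification that $k\geq 1\geq 1/\ga$, are the only points needing attention, and you handle them correctly.
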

These results improve on all results implicitly derivable in previous literature; see \S \ref{sec_EC} for details.

\subsection{Concluding sections \S \ref{sec_previous} and \S \ref{sec_appendices}}
In \S \ref{sec_previous} we briefly survey certain recent works toward Conjecture \ref{conj_class}, with a focus on the relation to the Generalized Riemann Hypothesis. In contrast to the impact of the other major conjectures we have mentioned, GRH does not appear to imply Conjecture \ref{conj_class} directly, due to an interesting interaction with another set of conjectures, about number fields with generating elements of small height. Finally, in \S \ref{sec_appendices} we include brief appendices with further remarks about moment bounds and one special case of the discriminant multiplicity conjecture.

\subsection{Notational conventions on families of fields}\label{sec_conventions}
A ``family'' (set) of fields is formally defined as follows. 
Given an integer $n \geq 1$ and a fixed transitive subgroup $G \subseteq S_n$, for every integer $X \geq 1$ we can define a family $\Fscr(X)$ to be $\{ K/\Q : \deg{K/\Q} = n, \Gal(\tilde{K}/\Q) \simeq G, D_K= |\Disc K/\Q| \leq X \},$
where all $K$ are in a fixed algebraic closure $\overline{\Q}$, $\tilde{K}$ is the Galois closure of $K$ over $\Q$,  the Galois group is considered as a permutation group on the $n$ embeddings of $K$ in $\overline{\Q}$, and the isomorphism with $G$ is one of permutation groups.
We let $\Fscr =\Fscr(\infty)$. It is furthermore possible to impose additional restrictions on the definition of the family, such as fixing the signature, certain local conditions, or ramification restrictions. In some cases where we wish to be more vague (for example, considering all degree $n$ extensions), we will just refer to a family $\Fscr$ and the subset $\Fscr(X)$ of those fields $K \in \Fscr$ with $D_K \leq X$.
Given a family $\Fscr$, we will say that a set $E \subset \Fscr$ is a subset of  density zero if 
\[  \frac{|E \intersect \Fscr(X)|} {|\Fscr(X)|}\maps 0 \quad \text{as $X \maps \infty$}.\]
If a property holds for all fields in a family except those lying in a possible exceptional set of density zero, we say the property holds for ``almost all'' fields in the family.

\section{The method of moments}\label{sec_moments}
We start with a general philosophy for why one can expect control of arbitrarily high moments of a function to yield good control on the function's values. On a ($\sig$-finite) measure space $(\Mcal,\mu)$ such as $\R$ with Lebesgue measure or $\Z$ with counting measure, given a complex-valued measurable function $f$, define for every real $\al>0$ the distribution function $
\lam(\al) = \mu( \{ x: |f(x)| > \al \})$. 
Then for any $\al>0$,
\beq\label{int_id}
 \al^{p} \lam(\al) \leq \int_{\{|f| >\al\}} |f(x)|^p d\mu \leq \int_\Mcal |f(x)|^p d\mu = \|f\|_{L^p(\Mcal,\mu)}^p .
\eeq
Suppose that it is known that for a sequence of $p$ growing arbitrarily large we have $\|f\|_{L^p}^p \leq C$ for a fixed constant $C$, uniformly in $p$. Assuming this, one intuitively sees that for fixed $\al > 1$, as $p \maps \infty$ through such a sequence, $\lam (\al)$ must be  vanishingly small in (\ref{int_id}). This is the principle by which uniform control of arbitrarily high moments  implies pointwise ($\mu$-a.e.) bounds. (By a ``moment'' we refer to the $p$-th power of the $L^p$ norm, and not the $L^p$ norm itself. Using the notation of the distribution function $\lam(\al)$ defined above, we have 
\[ \int_\Mcal |f(x)|^p d\mu = - \int_0^\infty \al^p d\lam(\al) = p\int_0^\infty  \al^{p-1} \lam(\al)d\al,\]
which are all  expressions for how ``moments'' are defined in different contexts, motivating the terminology.)

Let us see more concretely how this philosophy applies in a model of the discrete setting that is our concern in this paper,  by taking counting measure on $\Z$ (denoted by $|E|$ for a set $E \subset \Z$) and considering integer-valued functions of finite support (that is, vanishing aside from at finitely many integers). We will consider a restricted norm $\| f\|_{\ell^p(A_R)}^p = \sum_{n \in A_R} |f(n)|^p$ where $A_R$ is the dyadic interval   $A_R=\{n: R \leq n \leq 2R\}$ with $R \geq 1$. Now we will aim to show not that $f$ is uniformly bounded but that it does not grow too quickly. 

For any $1 \leq p < \infty$ and any $\Del>0$ the following inequality holds:
\beq\label{moment1}
   R^{\Del p} | \{ n \in A_R: |f(n)| > n^\Del\}|<\sum_{\{n \in A_R : |f(n)| > n^\Del\}} |f(n)|^p \leq \sum_{n \in A_R} |f(n)|^p =\|f\|_{\ell^p(A_R)}^p.
\eeq
Assume that for a sequence of $p \geq 1$,   for all $R \geq 1$ we have 
\beq\label{bound_assumed_uniform}
\|f\|_{\ell^p(A_R)}^p \leq |A_R|^\be
\eeq
 for some fixed $\be>0$. Then  we may conclude that for each such $p$, for all $R \geq 1$, we have 
\beq\label{moment2}
 | \{ n \in A_R: |f(n)| > n^\Del\}| \leq |A_R|^\be R^{-\Del p} \leq R^{\be - \Del p} .
 \eeq
In particular, if the sequence of $p$ for which we know this grows arbitrarily large, we can for each $R$ take $p$ sufficiently large that $R^{\be - \Del p}<1$ so that the set on the left-hand side of (\ref{moment2}) must be empty, that is, for all $n \in A_R$ we have $|f(n)| \leq n^\Del$. 
(Somewhat more nuanced arguments are required if the bound in (\ref{bound_assumed_uniform}) is not uniform in $p$; see the proof of Theorem \ref{thm_CLM_torsion} and the comments in \S \ref{sec_meas_space}.)

We will adapt this method of moments first to the setting in which our ``discrete space'' is a set of fields, and the function $f$ measures the size of the $\ell$-torsion subgroup (\S \ref{sec_CLM_predictions}); and later, to the setting in which the discrete space is $\mathbb{N}$ and $f$ counts the number of fields with a specified discriminant (\S \ref{sec_gen_Malle}).
These ideas may be adapted to many other settings, such as other functions mapping fields (ordered by some invariant) to non-negative real numbers;
for detailed remarks from the perspective of $L^p$ spaces, which may be helpful in such generalizations, see the appendix in \S \ref{sec_meas_space}.

Note that in practice, even if one cannot prove uniform control of arbitrarily high moments, it can sometimes be useful to prove an upper bound for one particular moment, such as the second moment. If an upper bound for an average shows that a certain pointwise bound must hold for almost all elements in a family (that is, for all but an exceptional family of density zero), then proving an appropriate upper bound on a higher moment can force a better upper bound on the cardinality of the possible exceptional family. See Remark \ref{remark_moment_ae} for an instance of this; this also motivates the interest for Theorem \ref{thm_EC} and Corollary \ref{cor_EC} on elliptic curves.

The method of moments has a long history   and wide applicability in number theory; we mention two additional settings. First, the method of moments shows that the Lindel\"of Hypothesis for the Riemann zeta function would be a consequence of upper bounds for arbitrarily large even moments of $\zeta(1/2+it)$ along the critical line. Precisely, arguments such as \cite[Thm. 13.2]{Tit86} show that the statement 
\[ \zeta(1/2 + it) \ll_\ep t^\ep \qquad \text{for all $\ep>0$, for all $t \in \R$}\]
is equivalent to the statement that for all $k=1,2,3,\ldots,$ 
\[ \frac{1}{T} \int_{1}^T |\zeta(1/2 + it)|^{2k} dt  \ll_{k,\ep} T^\ep \qquad \text{for all $\ep>0$, for all $T \in \R$}.\]
(We remark that in this context of Lebesgue measure on $\R$, rather than a discrete counting measure such as we will consider, one requires some additional consideration to ensure that a result holds for all $t$ and not just a.e. $t$. For this result on the zeta function, this comes from an integral expression for the analytic continuation, and the resulting notion of the derivative of $\zeta$ in the critical strip, so that any large value of $\zeta(1/2+it)$ would imply that $\zeta$ also takes large values on a positive measure neighborhood.)

Second, the principle of the method of moments can be used to deduce  the Weil-Deligne bound for character sums of prime moduli from another Riemann Hypothesis, that of $L$-functions of algebraic varieties over finite fields. This area is very deep (see e.g. \cite{Kow10} for an overview). Even given Deligne's proof \cite{Del74,Del80} of the Weil conjectures (which includes deductions for certain one-variable additive character sums), deducing the desired (square-root cancellation) bounds for complete character sums in higher dimensions is difficult (the subject, for example, of many celebrated works of N. Katz, which fully employ tools of algebraic geometry). In contrast, ``elementary'' presentations proceed by the philosophy we have encapsulated above as the method of moments. 
The idea at the heart of this approach is, roughly speaking, that the  character sum $S$ modulo a prime $p$ may be embedded inside a family of character sums $S_\nu$ over $\F_{p^\nu}$ for all $\nu \geq 1$. On the one hand, for each $\nu$ the sum $S_\nu$ may be interpreted as a sum of roots associated to a  rational function (the zeta function), which are themselves $\nu$-powers of the roots associated to the original sum $S$.  On the other hand, for each $\nu$ the sum $S_\nu$ may be interpreted in terms of counting points on a variety over $\F_{p^\nu}$ with a prescribed error-term. If for all sufficiently large $\nu$ the error term is  $O(B^\nu)$  then one learns (by uniting the two interpretations) that  the $\nu$-th moment of the roots corresponding to $S$ is $O(B^\nu)$ for all sufficiently large $\nu$, and hence by the general philosophy of the method of moments, one can deduce that each root associated to $S$ is $O(B)$. One finally deduces that $|S|$ itself is $O(B)$. (In this application, one is of course thinking of $B = p^{n/2}$ for the $n$-dimensional case.) There are two general presentations of this style of argument: first,  Stepanov's method, exposed in \cite{Sch76} and \cite[\S 11.7]{IwaKow04}; see in particular Lemma 11.22 of \cite{IwaKow04} for the role of arbitrarily high moments in closing this argument. Second, there is the ``method of moments'' work of Hooley \cite{Hoo82}, whose consequences are summarized in \cite[Lemma 3.5]{Bro15}, and which also has a very clear, explicitly computed application in the work of \cite[Lemma 33]{BomBou09}.

 \section{Relation to the Cohen-Lenstra-Martinet moment predictions}\label{sec_CLM_predictions}

We start from the perspective of the heuristics of Cohen-Lenstra \cite{CohLen84} and Cohen-Martinet \cite{CohMar90}. We will show in this section that a collection of moment asymptotics predicted by these heuristics, and indeed even a collection of much weaker upper bounds, implies Conjecture \ref{conj_class}. We will furthermore note that another well-known conjecture formulated under the heuristics (the density statement) does not suffice alone to imply Conjecture \ref{conj_class}.

The Cohen-Lenstra and Cohen-Martinet heuristics  predict, for ``good'' primes $\ell$, the distribution of class groups and their $\ell$-torsion subgroups as $K$ varies over a family of number fields of fixed degree. For example,  the Cohen-Lenstra heuristics for the family $\Fscr_2^-(X)$ of imaginary quadratic fields imply a moment asymptotic for each odd prime $\ell$: namely that for every positive integer $k \geq 1$,
  \beq\label{CL_k_moment}
 \sum_{K \in \Fscr_2^-(X)} |\Cl_K[\ell]|^k \sim  c^-_{\ell,k} \sum_{K \in \Fscr_2^-(X)} 1,
 \eeq
 where $c^-_{\ell,k}$ is the number of $\Z/\ell\Z$ subspaces of the vector space $(\Z/\ell\Z)^k$ (see, e.g. \cite[Lemma 3.2]{Woo15}).  For real quadratic fields, the Cohen-Lenstra heuristics imply similar moment asymptotics with different values of the constant, say $c_{\ell,k}^+$.

Given a permutation group $G$,  for primes $\ell \ndiv |G|$, the Cohen-Martinet heuristics imply 
moment asymptotics analogous to (\ref{CL_k_moment}) for the family of fields with Galois group $G$ and fixed signature, again with different values of the constant playing the role of $c_{\ell,k}^-$.  (This follows from \cite[Theorem 6.1]{Wang2019}, after expressing  $ |\Cl_K[\ell]|^k$
as $|\Hom(\Cl_K,(\Z/\ell\Z)^k)|$ and then noting each homomorphism $\phi$ corresponds to a $\Gamma$ module homorphism from 
$\Cl_K$ to $\Ind_{1}^{\Gamma} \im \phi$.)

It is  worth mentioning the few known asymptotic results that verify cases of (\ref{CL_k_moment}) or its analogues for higher degree fields.
First, Davenport and Heilbronn \cite{DavHei71} proved (\ref{CL_k_moment}) for $\ell=3$ and $k=1$, with $c^-_{3,1} = 2$, as well as its real analogue with $c_{3,1}^+ = 4/3$. Second, Bhargava \cite{Bha05} proved the conjectured asymptotic for cubic fields and $2$-torsion, for the first moment ($k=1$). 
Although not originally included in the Cohen-Lenstra heuristics, Fouvry and Kl\"{u}ners \cite{FouKlu06} proved an analogous result related to  $4$-torsion when $K$ is quadratic, for all $k \geq 1$ (see Remark \ref{remark_FK}).

There is also an increasing body of recent work toward related asymptotics, though not exactly of the kind mentioned above. For example, recent work of Klys \cite{Kly16} gives certain asymptotic results on $3$-torsion in cyclic cubic fields; see also the recent work on 16-rank in quadratic fields, e.g. Milovic \cite{Mil17a},  Koymans and Milovic \cite{KoyMil16,KoyMil17a}, or work of Bhargava and Varma \cite{BhaVar15,BhaVar16} elaborating on \cite{DavHei71,Bha05}.  See also
results of Klys \cite{Kly16} on moments of $p$-torsion in cyclic degree $p$ fields (conditional on GRH for $p \geq 5$) and of Smith \cite{Smi16,Smi17x} on the distribution of the $2^{k}$-class groups in imaginary quadratic fields (recently extended by Koymans and Pagano \cite{KoyPag18x} to $\ell^k$-class groups of degree $\ell$ cyclic fields).
 
\subsection{Relation to Theorem \ref{thm_CLM_torsion}}
We will now show that if a moment asymptotic such as (\ref{CL_k_moment}) holds, as long as we assume information is known for  \emph{arbitrarily high} moments, we can deduce pointwise upper bounds on $|\Cl_K[\ell]|$ for each field $K$, with no exceptions.

In fact our assumption in Theorem \ref{thm_CLM_torsion} is weaker than (\ref{CL_k_moment}), taking the form of an upper bound: we assume the upper bound (\ref{CL_k_upper}) holds for one fixed $\al$ and all integers $k \geq 1$. For any $\al \geq 1$ the case of the inequality (\ref{CL_k_upper}) is  implied by (\ref{CL_k_moment}) or its appropriate analogue, and for all choices of $\al\geq 1$ the inequality is weaker than the conjectured Cohen-Lenstra-Martinet asymptotic for $\ell \ndiv |G|$.
For the cases of $\ell$ dividing $|G|$, for higher degree fields, in analogy with the quadratic case, we might also guess that for a given permutation group $G$, (\ref{CL_k_upper}) holds for $\ell $ dividing $|G|$  with $\alpha=1+\ep$, and that a proof of this in the case of $\ell$ dividing $ |G|$ is probably much more accessible than for good primes (see e.g. \cite{Kly16}).

\subsection{Proof of Theorem \ref{thm_CLM_torsion}}
 Recall the setting of Theorem \ref{thm_CLM_torsion}, which we introduced in \S \ref{sec_CL_intro}. We will use the fact that there exist  constants $C,B$ (not necessarily sharp) such that 
 \beq\label{FCB}
 |\Fscr(X)| \leq C X^B
 \eeq
  for all $X \geq 1$. 
  In fact this is known for each $n \geq 2$ if we consider the set of all degree $n$ extensions of $\Q$.
   By Malle's conjectures \cite{Mal02,Mal04}, it is expected that $B = 1$ should suffice, and this is known for $n=2$ by a classical computation, $n=3$ by \cite{DavHei71}, $n=4$ by \cite{CDO02}, \cite{Bha05}, and $n=5$ by \cite{Bha10a}.
For each $n \geq 6$ it is known that for the set of all degree $n$ extensions of $\Q$,  
\beq\label{fields}
|\Fscr(X)| \leq a_n X^{c_0 (\log n)^2},
\eeq
 for a constant $a_n$ depending only on $n$ and a certain absolute constant $c_0$ ($c_0=1.564$ suffices).
 This is due to Lemke Oliver and Thorne \cite{LemTho20}, and represents  the current state of a progression of works with ever-improving bounds: Couveignes \cite{Cou20}, Ellenberg and Venkatesh \cite{EllVen06}, Schmidt \cite{Sch95}.
 Of course for certain more restrictive collections of degree $n$ fields, we would expect the sharp value for $B$ to be even smaller, but we do not require this specificity here. We will note the dependence on $B$ in the initial steps of our  argument below, but later use that $B$ can be taken to depend only on the degree $n$.

 We now prove Theorem \ref{thm_CLM_torsion}. 
Let $n,\ell \geq 1$   be fixed, and assume that for a fixed $\al \geq 1$, (\ref{CL_k_upper}) is known for every integer $k \geq 1$.
Let $\Del>0$ be given, and then let $\Bscr_\Fscr(X,\Del)$ denote the ``bad'' set of fields $K \in \Fscr(X)$ such that 
$ |\Cl_K[\ell]| > D_K^\Del.$
Then (\ref{CL_k_upper}) implies that for every $X \geq 1$,
\begin{align*}
(X/2)^{\Del k}  \cdot  |\Bscr_\Fscr(X,\Del)  \intersect \{K  \in \Fscr(X): X/2< D_K \leq X\}|
&<  \sum_{K\in \bstack{\Bscr_\Fscr(X,\Del)}{X/2< D_K \leq X}} |\Cl_K[\ell]|^{k}  \\
 &\leq \sum_{\bstack{K \in \Fscr(X)}{X/2< D_K \leq X}} |\Cl_K[\ell]|^{k}  \ll_{n,\ell,k,\al}   |\Fscr(X)|^{\al}.
\end{align*}
We deduce that for each $k \geq 1$, for every $X \geq 1$,
\[ |\{K \in \Fscr(X): X/2< D_K \leq X, |\Cl_K[\ell]| > D_K^{\Del} \}|\ll_{n,\ell,k,\al,\Del}   X^{B\al  - \Del k}.\]
Applying this for some $k$ sufficiently large   that $B\al- \Del k < 0$, we conclude that for all $X$ sufficiently large (determined by $n,\ell,\al,\Del$),
\[ |\{K \in \Fscr(X): X/2< D_K \leq X, |\Cl_K[\ell]| > D_K^{\Del} \}| < 1\]
so that the set must be empty. Consequently, given any $\Del>0$, there exists $X_0=X_0(n,\ell,\al,\Del)$ such that for all fields $K \in \Fscr$ with $D_K \geq X_0$, we have $|\Cl_K[\ell]| \leq D_K^\Del$. Of course,  for all fields $K \in \Fscr$ with $D_K < X_0$ we can apply the trivial bound  
\[|\Cl_K[\ell]| \leq |\Cl_K| \ll_{\ep} D_K^{1/2+\ep} \ll_\ep X_0^{1/2 + \ep}  ,\]
for all $\ep>0$. Upon fixing $\ep=1/2,$  say,   this is a  bounded constant (depending on $n,\ell,\al,\Del$).
Combining these two results, we conclude that for the given $\Del$, for  all $K \in \Fscr$, 
\[ |\Cl_K[\ell]| \ll_{n,\ell,\al,\Del} D_K^{\Del} .\]
Since $\Del>0$ may be taken arbitrarily small in this argument, this concludes the proof of Theorem \ref{thm_CLM_torsion}.

\subsection{Further remarks}
\begin{remark}[Equivalent formulations]
This proof may clearly be adapted to functions $V : \Fscr \maps \R_{>0}$ other than $V(K) = |\Cl_K[\ell]|$, or in settings where $\Fscr$ is ordered by invariants other than the discriminant. Also, note that in the hypothesis of Theorem \ref{thm_CLM_torsion}, we have not ruled out the possibility that $\al$ is large. In fact, the statement that (\ref{CL_k_upper}) holds (for all $X \geq 1$) for a particular fixed $\al$  and an infinite sequence of arbitrarily large $k$ is equivalent to the statement that (\ref{CL_k_upper}) holds (for all $X \geq 1$)  for $\al = 1 + \ep_0$ with any $\ep_0>0$,  and all real $k \geq 1$. We provide details on this in Remark \ref{remark_1} in the appendix in \S \ref{sec_meas_space}.
\end{remark}

\begin{remark}[Density zero exceptions]\label{remark_moment_ae}
If one only knows (\ref{CL_k_upper}) for one value of $k$, one can still conclude that an upper bound holds for $|\Cl_K[\ell]|$, except for certain possible exceptional fields.  
 To be precise, let $n,\ell$ be fixed.  Suppose that for a single fixed value $k  \geq 1$, (\ref{CL_k_upper}) is known for every $\al>1$. Then  for each $\Del >0$, one can deduce that  $|\Cl_K[\ell]| \leq D_K^\Del$ holds for every field $K \in \Fscr$ except for at most a subset of density zero.
To see this, we argue as above to show that for any $\Del>0$ of our choice, for all $\al>1$, and for that fixed $k$,
\beq\label{moment_chain}
 |\Bscr_\Fscr(X,\Del) \intersect \{K  \in \Fscr(X): X/2< D_K \leq X\}| \ll_{n,\ell,k,\al, \Del} |\Fscr(X)|^{\al} X^{-\Del k} \quad \text{for all $X \geq 1$.}
 \eeq
To remove the dyadic restriction on the discriminant, we sum over 
 at most $\log_2 X + 1 \leq 2 \log_2 X$ dyadic ranges.  For the purposes of this discussion we suppose that $|\Fscr(X)|$ can be expressed as a power of $X$, and then we break the discussion into two cases:
 when $|\Fscr(X)|^{\al} X^{-\Del k}$ is  expressed as a power of $X$, the exponent is (i) non-negative; (ii)  negative.
In case (i), after summing over the dyadic ranges we can conclude that  
$|\Bscr_\Fscr(X,\Del)| \ll_{n,\ell,k,\al, \Del} |\Fscr(X)|^{\al} X^{-\Del k} \log X$ for all $X \geq 1$.
Consequently there exists $\al>1$ sufficiently close to $1$  that $|\Bscr_\Fscr(X,\Del)|/|\Fscr(X)| \ll |\Fscr(X)|^{\al-1}X^{ - \Del k} \log X$  has a negative exponent on the right-hand side. Then $\Bscr_\Fscr(X,\Del)$ has density zero in $\Fscr$.
In case (ii), then there exists a threshold $X_0 = X_0(n,\ell,k, \al,\Del)$ such that for $X \geq X_0$, 
the set on the left-hand side of (\ref{moment_chain}) is empty. Then there are only finitely many fields in $\Fscr$ with 
$|\Cl_K[\ell]| > D_K^\Del$, so that certainly these fields are density zero in $\Fscr$.
 \end{remark}

\begin{remark}[Illustrative case]\label{remark_FK}
We note a case in which moment bounds sufficient for the hypotheses of Theorem \ref{thm_CLM_torsion} have been proved: Fouvry and Kl\"uners have proved that for $\Fscr_2^\pm$ defined to be the family of real (respectively imaginary) quadratic fields with fundamental discriminants, for every integer $k \geq 1$,
\beq\label{FK_result}
\sum_{K \in \Fscr_2^\pm(X)} 2^{k \; \rk_2(2 \Cl_K)} \sim C^\pm_k |\Fscr_2^\pm(X)|
\eeq
as $X \maps \infty$, for a specific constant $C_k^\pm$. (We use additive notation for the finite abelian group $\Cl_K$, so that we write $2\Cl_K$ where Fouvry and Kl\"uners write $\Cl_K^2$; see \cite[Conj. 2]{FouKlu06} and Theorems 6--11 therein.) By Theorem \ref{thm_CLM_torsion}, (\ref{FK_result}) shows that $|(2\Cl_K)[2]| \ll_\ep D_K^\ep$  for all $\ep>0$. From this it follows that $|\Cl_K[4]| \ll_\ep D_K^\ep$ for all $K \in \Fscr_2^{\pm}(X)$. (Indeed, $ |\Cl_K[4]|=|2(\Cl_K[4])| \cdot |\Cl_K[4]/2(\Cl_K[4])|$. Within the first factor,  $|2(\Cl_K[4])|=|(2\Cl_K)[2]|$, which we have just bounded. Within the second factor, by the structure theorem of finite abelian groups,
 $|\Cl_K[4]/2(\Cl_K[4])|=|\Cl_K[2]|$, which is bounded by (\ref{Gauss_2}).)  Although this is an illustration of how Theorem \ref{thm_CLM_torsion} can be applied,  the result for $4$-torsion holds trivially for group-theoretic reasons once (\ref{Gauss_2}) is known; see \S \ref{sec_composite}. 
\end{remark}

\begin{remark}[Density statements]\label{remark_moments_density}
It is worth pointing out that  aside from moment asymptotics, there is another type of asymptotic, a density statement, which also follows from the Cohen-Lenstra and Cohen-Martinet heuristics.
For example, the density statement conjectured by the Cohen-Lenstra heuristics for imaginary quadratic fields  is that for every odd prime $p$ and finite abelian $p$-group $G$, 
\beq\label{CL_density}
\lim_{X\ra\infty} \frac{\sum_{\bstack{K\in \mathscr{F}^-_2(X)}{(\Cl_K)_p\isom G}} 1 }{\sum_{K\in \mathscr{F}^-_2(X)}1}= \frac{\prod_{i\geq 1} (1-p^{-i})}{|\Aut G|},
\eeq
where for any abelian group $A$ we denote by $A_p$  the Sylow $p$-subgroup of $A$. (For $p=2$, Smith \cite{Smi17x}  has proved that as $K$ varies over all imaginary quadratic fields, the distribution of $(2\Cl_K)_2$ obeys the heuristic expectation (\ref{CL_density}) for every finite abelian 2-group $G$; see also the extension \cite{KoyPag18x}.) 
In fact the Cohen-Lenstra and Cohen-Martinet conjectures make claims about averages over class groups of any ``reasonable'' real-valued function  of groups, where reasonable is not precisely defined, but is meant to include
both characteristic functions as in (\ref{CL_density}), and functions like $G\mapsto |G[p]|^k$ that are averaged in our moments, e.g. (\ref{CL_k_moment}). 

We have shown that moments such as \eqref{CL_k_moment} that are predicted by the Cohen-Lenstra-Martinet heuristics  imply Conjecture \ref{conj_class}.  In contrast, it is easy to see that density statements
such as \eqref{CL_density} do not formally imply Conjecture \ref{conj_class}, unless some additional information is used as input.  
For suppose we had a function $C$ from number fields to finite abelian groups such that for every odd prime $p$ and finite abelian $p$-group $G$, 
$$
\lim_{X\ra\infty} \frac{\sum_{\bstack{K\in \mathscr{F}(X)}{C(K)_p\isom G}} 1 }{\sum_{K\in \mathscr{F}(X)}1}= \frac{\prod_{i\geq 1} (1-p^{-i})}{|\Aut G|}.
$$
Then such averages would also hold if we replaced $C$ by any function $C'$ that only differs on a set of number fields that is of density zero in $\Fscr(X)$.  In particular, while 
the densities above still held for $C'$, we could impose $|C'(K)[\ell]|>D_K$ for infinitely many $K$.
\end{remark}

\section{Relation to the discriminant multiplicity conjecture}\label{sec_DMC}

Our next goal is to make precise the relationship of the discriminant multiplicity conjecture (Conjecture \ref{conj_DMC})  to the $\ell$-torsion conjecture  (Conjecture \ref{conj_class}).
\subsection{Context of the discriminant multiplicity conjecture}

 Recall the definition of Property $\Dbf_n(\varpi)$ (Property \ref{property_D}). The discriminant multiplicity conjecture posits that $\Dbf_n(0)$ should hold for each $n \geq 2$. 
(In the other direction, Ellenberg and Venkatesh \cite[p. 164]{EllVen05} have noted that genus theory shows that the number of degree extensions $K/\Q$ with a fixed Galois group and fixed discriminant $D$ can be as large as $D^{c/ \log \log D}$.)

For $n=3,4,5$ the best-known results are as follows. For cubic fields, $\Dbf_3(1/3)$ is known, by \cite[Cor. 3.7]{EllVen07}.
(That corollary in fact states that $|\Cl_K[3]| \ll D_K^{1/3+\ep}$ for all quadratic fields $K/\Q$; this implies that $\Dbf_3(1/3)$ by work of Hasse \cite{Has30}; see also Remark \ref{remark_Hasse}.)

For quartic fields, $\Dbf_4(1/2)$ holds. To see this, let us momentarily denote by $\Dbf_4(G;\varpi)$ the veracity of Property  \ref{property_D} restricted to those fields of degree $4$ and Galois group of the Galois closure isomorphic to $G$. Then $\Dbf_4(S_4;1/2)$ is due to \cite{Klu06b}, $\Dbf_4(A_4; 0.2784....)$ is due to \cite[Prop. 2.5]{PTBW20}, $\Dbf_4(K_4;0)$ follows from class field theory, as does $\Dbf_4(C_4;0)$  (see e.g. techniques in \cite{Wri89} also illustrated in \cite[Prop.  2.1]{PTBW20}), and finally $\Dbf_4(D_4;0)$ holds;   we include a brief proof of this last fact in an appendix in \S \ref{sec_app_D4}. 

For quintic fields, $\Dbf_5(199/200)$ is deduced in \cite[Prop. 2.4]{PTBW20} as an immediate consequence of a count for $S_5$-fields with a power-saving error term (in particular the version in \cite[Thm. 2.4]{EPW17}, which builds on \cite[Thm. 1]{ShaTsi14} and \cite[Thm. 1]{Bha10a}).

At present, for $n \geq 6$, nothing better than the bound trivially available from (\ref{fields}) is known for $\Dbf_n(\varpi)$.
For $n \geq 6$, if one further restricts to counting only those degree $n$ fields $K$ with $D_K=D$ and with Galois group of the Galois closure $ \simeq G$ for a certain fixed transitive subgroup $G \subseteq S_n$, then in some cases better results are known; see \cite[\S 2]{PTBW20}.

Conjecture \ref{conj_DMC} has been  known for some time to be closely connected to questions about $\ell$-torsion in class groups, for primes $\ell$. 
For example, Duke \cite{Duk95} showed that quartic fields of fixed discriminant $-q$ ($q$ prime) can be explicitly classified by odd octahedral Galois representations of conductor $q$, and the number of such fields can be expressed as in \cite{Hei71} as an appropriate average of the number of 2-torsion elements in the class groups of cubic number fields of discriminant $-q$. 
More generally, in \cite[p. 164]{EllVen05}, Ellenberg and Venkatesh stated that Conjecture \ref{conj_DMC} implies Conjecture \ref{conj_class} in the case of $\ell$ prime; these two ideas are also implicitly linked in the exposition of \cite{Duk98}.

 \subsection{Proof of Theorem \ref{thm_fields_cl}}
Let $K/\Q$ be a degree $n$ extension with Hilbert class field $H_K$, so that $\Gal(H_K/K) \simeq \Cl_K.$ Let $\ell$ be a fixed prime and write $\Cl_K$ additively, so that we have $\Cl_K [\ell] \simeq \Cl_K / \ell \Cl_K $ and  a corresponding extension $K \subset L \subset H_K$ given by the fixed field $L=H_K^{\ell \Cl_K}$.

\begin{center}
\begin{tikzpicture}[-,>=stealth',shorten >=1pt,auto,node distance=2cm, thick,main node/.style]
\node[main node] (Kt) {$H_K$};
\node[main node] (L) [node distance=1.35cm, below of=Kt] {$L$};
\node[main node] (K) [node distance=1.75cm, below of=L] {$K$};
\node[main node] (Q) [node distance=1.35cm, below of=K] {$\Q$};
\node[main node] (M) [node distance=1cm, left of=K, above of=K] {$M$};

\draw[-](Kt) to [bend right = 75,min distance=2cm] node [swap]{$\Cl_K$} (K);
\draw[-] (Kt) to node {} (L);
\draw[-] (K) to node {$n$} (Q);
\draw[-, bend left] (L) to node {$\Cl_K[\ell] \simeq \Cl_K/\ell \Cl_K$} (K);
\draw[-] (L) to node{} (M);
\draw[-] (M) to node{$\ell$} (K);

\end{tikzpicture}
\end{center}

  Suppose $|\Cl_K[\ell]| = \ell^r$. Consider for each surjection $\phi : \Cl_K[\ell] \maps \Z/\ell\Z$ the corresponding extension $K \subset M \subset L$ with $M/K$ of degree $\ell$. 
There are $N(\ell,r):=(\ell^r-1)/(\ell-1)$ such extensions $M/K$, each of which is degree $n\ell$ over $\Q$ and naturally satisfies 
\[\Disc M/\Q = ( \Nm_{K/\Q} \Disc M/K) \cdot (\Disc K/\Q)^\ell.\]
But since $M/K$ lives inside the unramified extension $H_K/K$, the first factor on the right-hand side is 1, so that $D_M = D_K^\ell$. 
Now supposing we have assumed Property $\Dbf_{n\ell}(\Del)$ holds, we know that for every $\ep>0$ there exists a constant $C_{n\ell,\ep}$ such that the number of degree $n\ell$ extensions of $\Q$ with discriminant precisely $D_K^\ell$ is $\leq C_{n\ell,\ep} (D_K^{\ell})^{\Del+\ep}$.
Then we can conclude that 
\[ |\Cl_K[\ell]| = (\ell-1) N(\ell,r) + 1 \leq (\ell-1) C_{n\ell,\ep} (D_K^{\ell})^{\Del+\ep}+1. \]
Thus for this fixed pair $n,\ell$, Property $\Cbf_{n,\ell}(\Del')$ holds with $\Del' = \ell \Del$, proving our claim.

  \begin{remark}\label{remark_Hasse}
  For $\ell$ an odd prime, there is a partial converse to Theorem \ref{thm_fields_cl}. Let $\Dbf_\ell(D_\ell;\Del)$ denote the veracity of Property \ref{property_D} for those degree $\ell$ fields with Galois group of the Galois closure isomorphic to $D_{\ell} \subset S_\ell$, the dihedral group of $2\ell$ elements (symmetries of a regular $\ell$-gon). Then for every odd prime $\ell$, for every $\Del>0$,
$ \Dbf_\ell (D_{\ell}; \Del)$ holds if $\Cbf_{2,\ell}(\Del)$ holds.
  For $\ell=3$ this is the conclusion of Hasse \cite{Has30}, and in particular shows that the upper bound for $3$-torsion in quadratic fields controls the number of cubic extensions of $\Q$. In general, for this deduction see Kl\"uners \cite[Lemma 2.3]{Klu06}, which applies to degree $\ell$ $D_\ell$-extensions of any fixed number field. Recently \cite[Thm. 1.1]{CohTho16x} and  \cite[Cor. 1.6]{FreWid18x} have used this relation to capitalize on the recent improvements of \cite{EPW17,PTBW20} on $\ell$-torsion in quadratic fields in order to bound from above the number of degree $\ell$ $D_\ell$-fields of bounded discriminant. 
   \end{remark}
  
\section{Relation to a generalized Malle conjecture}\label{sec_gen_Malle}
In this section we recall the formal statement of a generalized Malle conjecture as in Ellenberg and Venkatesh \cite{EllVen05}, and state how via the method of moments it implies the discriminant multiplicity conjecture (Conjecture  \ref{conj_DMC}).
Counting number fields of bounded discriminant is an average version of the pointwise bound conjectured in the discriminant multiplicity conjecture. Of course, if $\Dbf_n(\varpi)$ is known then we may immediately deduce an upper bound for the average. At first glance, it is surprising that an implication in the other direction also exists; this would seem to contradict the philosophy of \S \ref{sec_moments}, which relied on bounding arbitrarily high moments, and not just an average, in order to deduce that a pointwise bound holds at \emph{all} points in a discrete space. The strategy of \cite{EllVen05}  is that by converting $k$-th moments into averages of $k$-products, one can deduce pointwise bounds only under assumptions on averages.

\subsection{Context of the Malle conjecture}
Recall that it is conjectured that for every $n \geq 2$, if we define $Z_n(\Q;X)$ to be the set of fields $K/\Q$ of degree $n$ (within a fixed closure $\overline{\Q}$), then 
\[ |Z_n(\Q;X)| \sim c_n X \]
for a particular constant $c_n$ (this is recorded for example as a ``folk conjecture'' in \cite[Conj. 1.1]{EllVen05}). This is easy to show for $n=2$ (in which case one is effectively counting square-free numbers, see e.g. the appendix in \cite{EPW17}). It has been proved in deep work of Davenport-Heilbronn \cite{DavHei71} for $n=3$; Cohen, Diaz y Diaz, and Olivier \cite{CDO02} and Bhargava \cite{Bha05} for $n=4$ (the $D_4$ and non-$D_4$ cases, respectively); and Bhargava \cite{Bha10a} for $n=5$. For any degree $n \geq 6$, such an asymptotic, or even an upper bound better than (\ref{fields}), is not known. (Lower bounds are also difficult; \cite{BSW16x} holds the current record $\gg X^{1/2 + 1/n}$.)

If we further define $Z_n(\Q;G,X)$ to restrict to those fields in $Z_n(\Q;X)$ with $\Gal(\tilde{K}/K) \simeq G$, then Malle \cite{Mal02}, \cite{Mal04} has presented heuristics for upper and lower bounds 
\beq\label{Malle}
 X^{a_G} \ll |Z_n(\Q;G,X)| \ll X^{a_G+\ep}
 \eeq
 as $X \maps \infty$ with $a_G^{-1} =\min \{ \mathrm{ind}(g) : g \in G, g \neq \mathrm{id}\}$. (Here the index of an element $g \in S_n$ is defined as $\mathrm{ind}(g) = n-j$ where $j$ is the number of orbits of $g$ acting on a set of $n$ elements.) These ideas can also be adapted, at least heuristically according to the so-called Malle-Bhargava principle \cite[\S 10]{Woo16}, to take into account finitely  or infinitely many local conditions. Proving that this principle holds in appropriate situations is currently of central interest in number theory.

\subsection{Ellenberg and Venkatesh's moment argument}
We first recall, somewhat informally, how a standard method of moments argument would approach the discriminant multiplicity conjecture. Let us consider (\ref{moment1}) and (\ref{moment2}), adapted to the setting where for each $m \in \mathbb{N}$, $f(m)$ denotes the number of fields  $K \in Z_n(\Q;G,X)$ such that $D_K = m$. Then after adapting (\ref{moment1})  to this setting, as long as we can prove that for a sequence of arbitrarily large $k$ we have for all $X \geq 1$ that 
\beq\label{moment3}
\sum_{m \leq X} |f(m)|^k  \leq X^\be 
\eeq
for some fixed $\be$, we will be able to conclude similarly to (\ref{moment2}) that for all $X \geq 1$, 
 \[ | \{ X/2 < m \leq X : f(m) > m^\al \}| \ll  X^{\be - \al k} .\]
Then no matter the fixed value of $\be$, and for any $\al>0$ arbitrarily small, by applying this with sufficiently large $k$, we could deduce that for sufficiently large $X$ the set on the left-hand side is empty. From this we could deduce that there are at most $O_\al(m^\al)$ fields $K \in \Z_n(\Q;G,X)$ with $D_K=m$, as desired. 
A significant obstacle to this approach is that in general, bounding even the first moment
 \[ |Z_n(\Q;G,X)| = \sum_{m \leq X} f(m) \]
 remains completely out of reach at present (e.g. for $G\simeq S_n$ with $n \geq 6$), so that proving (\ref{moment3}) may seem an unrealistic starting point. 

This is where the ideas of Ellenberg and Venkatesh \cite[Remark 4.7, Prop. 4.8]{EllVen05} play a role, by cleverly translating the $k$-th moment estimate (\ref{moment3}) into merely an average estimate, but for a different set of fields than $Z_n(\Q;G,X)$.
Consequently, they show that a generalized version  of the \emph{average} upper bound in (\ref{Malle}) would imply Conjecture \ref{conj_DMC}.

The formulation of Ellenberg and Venkatesh is in terms of a field invariant called an $f$-discriminant, with $f$ a rational class function (not to be confused with the function $f$ used above). Precisely, if $G$ is a fixed transitive subgroup of $S_n$, then a map $f$ from the set $\mathcal{C}_G$ of nontrivial conjugacy classes of $G$ to $\Z_{\geq 0}$ is said to be a rational class function if it is invariant under the $\Gal (\overline{\Q}/\Q)$-action 
 and has the property that for $g \in G$, $f(g)=0$ if and only if $g = \{\mathrm{id}\}$. 
One can define a corresponding $f$-discriminant of any Galois extension $L/\Q$ with Galois group $G$. Precisely, for any $p \ndiv |G|$ (so that certainly $p$ is at most tamely ramified in $L$ and the inertia group at $p$ is cyclic), let $c_p \in \mathcal{C}_G$ be the image of a generator of tame inertia at $p$.  
 Then the $f$-discriminant $D_f(L)$ may be defined by 
\[ D_f(L) = \prod_{p \ndiv |G|} p^{f(c_p)}.\]
In particular, if $f(g) = \mathrm{ind}(g)$, then $D_f(L)$ is the prime-to-$|G|$ part of the standard absolute discriminant of any of the $n$ conjugate degree $n$ extensions $L_0/\Q$ with Galois closure $L$, corresponding via Galois theory to the element stabilizers in $G$.

Ellenberg and Venkatesh ask whether for such $f$-discriminants, an appropriate analogue to (\ref{Malle}) holds. Given $G$, Ellenberg and Venkatesh define for a rational class function $f$ the invariant
\[ a_G(f) = \max_{c \in \mathcal{C}_G, c \neq \{ \mathrm{id}\}} f(c)^{-1}.\]
They then propose \cite[Ques. 4.3, Remark 4.7]{EllVen05} that if $Z^{(f)}(\Q,G;X)$ counts Galois extensions of $\Q$ with Galois group $G$ and $f$-discriminant at most $X$, then for every $\ep>0$,
\beq\label{Malle_EV}
|Z^{(f)}(\Q,G;X)| \ll_{G,f,\ep} X^{a_G(f) + \ep}.
\eeq
(In fact they propose a more refined asymptotic, but this upper bound version suffices for our considerations.)

Under these definitions, we can see that (\ref{Malle_EV}) suffices to imply Conjecture \ref{conj_DMC}, and hence Conjecture \ref{conj_class}.
  \begin{letterthm}[{\cite[Prop. 4.8]{EllVen05}}]\label{thm_EV_count}
Given any group $G$, define on $G^k = G \times G \times \cdots \times G$ the rational class function that is identically 1  away from the identity, that is $f(c_1,c_2,\ldots, c_k) = 1$ for any choice of conjugacy classes $c_1,c_2,\ldots, c_k$ in $\mathcal{C}_G$  that are not all trivial. 
If a generalized Malle conjecture (\ref{Malle_EV}) holds for all $G$, for this choice of $f$, then Conjecture \ref{conj_DMC} holds.
 \end{letterthm}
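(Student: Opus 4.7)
The plan is to reduce Conjecture \ref{conj_DMC} to bounding, for each fixed transitive subgroup $G \subseteq S_n$ with a chosen point stabilizer $H_0 \subseteq G$, the count $N_G(D)$ of Galois extensions $L/\Q \subseteq \overline{\Q}$ having $\Gal(L/\Q) \simeq G$ (as a permutation group via its transitive action on $n$ points) and such that the degree-$n$ subfield $K := L^{H_0}$ has $D_K = D$. Since the number of such degree-$n$ subfields inside a single $L$ equals $[G:N_G(H_0)]$, a constant depending only on $(G,H_0)$, and since there are only finitely many conjugacy classes of transitive $G \subseteq S_n$, a bound $N_G(D) \ll_{G,\ep} D^\ep$ for each $G$ yields Conjecture \ref{conj_DMC}.

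Fix $k \geq 1$ and view $N_G(D)^k$ as the number of ordered $k$-tuples $(L_1,\ldots,L_k)$ of such $G$-extensions. To each tuple associate the compositum $L := L_1 L_2 \cdots L_k \subseteq \overline{\Q}$, whose Galois group sits inside $G^k$ as a subgroup $H \subseteq G^k$ projecting surjectively onto every coordinate factor; the original $L_i$ are recovered from $L$ together with the embedding $H \hookrightarrow G^k$ as the fixed fields of the kernels of the coordinate projections. Hence the map from tuples to pairs (compositum $L$, embedding $H \hookrightarrow G^k$) is boundedly-to-one, with fiber size depending only on $G$ and $k$. The crucial point is that any prime $p$ ramified in $L$ must ramify in some $L_i$ and therefore must divide $D_{K_i} = D$; so for the rational class function $f$ on $G^k$ specified in the theorem (and equally for its restriction to any such $H$, which is identically $1$ away from the identity of $H$), the $f$-discriminant satisfies
$$D_f(L) \;=\; \prod_{\substack{p\text{ ramified in } L \\ p \nmid |H|}} p^{f(c_p)} \;=\; \prod_{\substack{p\text{ ramified in } L \\ p \nmid |H|}} p \;\leq\; D.$$

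Applying the assumed generalized Malle bound (\ref{Malle_EV}) for this $f$ to each such $H \subseteq G^k$, and using that $a_H(f) = \max_{c \neq \{\mathrm{id}\}} f(c)^{-1} = 1$, one obtains $|Z^{(f)}(\Q,H;D)| \ll_{G,k,\ep} D^{1+\ep}$. Summing over the finitely many admissible $H \subseteq G^k$ and absorbing the bounded fiber factor of the tuple-to-compositum map gives $N_G(D)^k \ll_{G,k,\ep} D^{1+\ep}$, so that $N_G(D) \ll_{G,k,\ep} D^{(1+\ep)/k}$; letting $k$ grow arbitrarily large then yields $N_G(D) \ll_{G,\ep'} D^{\ep'}$ for every $\ep' > 0$, which combined with the first-paragraph reduction establishes Conjecture \ref{conj_DMC}. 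The main obstacles are technical rather than structural: one must justify carefully the bounded-fiber count for the tuple-to-compositum map, absorb the harmless discrepancy between $D_f(L)$ and the true discriminant at the finitely many primes dividing $|G|$, and organize the sum over subgroups $H \subseteq G^k$. The conceptual crux is the Ellenberg--Venkatesh device of reinterpreting the $k$-th moment of a discriminant-counting function as an average count of $k$-fold Galois objects, so that a merely averaged Malle-type bound suffices to produce a pointwise bound for each discriminant $D$.
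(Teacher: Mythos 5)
Your proposal is correct and follows essentially the same route as the paper's sketch of the Ellenberg--Venkatesh argument: reinterpret the $k$-th power of the fixed-discriminant count as a count of $k$-fold composita whose $f$-discriminant is squarefree (hence at most $D$), apply the generalized Malle bound with exponent $a=1$ independent of $k$, and let $k\to\infty$. Your write-up is somewhat more careful than the paper's sketch in that you use ordered tuples and allow the compositum's Galois group to be a proper subgroup $H \subseteq G^k$ surjecting onto each factor, summing the Malle bound over the finitely many such $H$ and tracking the bounded fibers of the tuple-to-compositum map, a point the sketch glosses over by asserting the compositum has group $\simeq G^k$.
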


We merely sketch the ideas, directly following \cite[Prop. 4.8]{EllVen05}. Since isomorphism classes of degree $n$ $G$-fields are in finite-to-one correspondence (that is, $m_G$-to-one, where $m_G$ depends only on $G$)
 with isomorphism classes of degree $|G|$ Galois $G$-fields, to verify  Conjecture \ref{conj_DMC} it suffices to prove that for any fixed Galois group $G$, the number of Galois extensions $K/\Q$ with discriminant $D_K = D$ is $\ll_\ep D^\ep$ for every $\ep$. The proof hinges on the fact that for $f$ as chosen in the theorem statement, the $f$-discriminant of a field $K$ is of the shape 
\[ \prod_{p|D_K,\;  p \ndiv |G|} p,
\]
 in which the power of the prime is always 1. 
Moreover, for the particular choice of $f$ in Theorem \ref{thm_EV_count}, we have $a_{G^k}(f)=1$ for all $k$; this fact will play a key role.

For each fixed integer $D \geq 1$ let $m(D)$ denote the number of Galois fields $K_1,\ldots,K_{m(D)}$, each with Galois group $\simeq G$ and each with $D_{K_i} = D$; our goal is to show that $m(D) \ll D^\ep$. For any fixed $k \leq m(D)$ we can generate on the order of ${m(D) \choose k} \gg_k m(D)^k$ composite fields $L=K_{j_1}\cdots K_{j_k}$ each  with Galois group $\simeq G^k$, and each with $f$-discriminant $D_f(L/\Q) \leq D$, with $f$ as defined as above.

Now suppose that there exists some $\al>0$ such that for a sequence of arbitrarily large $D$, we have  $m(D) \geq D^{\al}$. Then for a sequence of arbitrarily large $D$ we would obtain at least $ \gg_k D^{\al k }$ elements in $Z^{(f)}(\Q,G^k;D)$. Comparing this to the conjectured statement (\ref{Malle_EV}) in the case of the group $G^k$, we see that we must have 
$\al k \leq a_{G^k}(f)$, but by construction, we have $a_{G^k}(f)=1$ for all $k$.
 Since $k$ could be taken arbitrarily large, we would conclude that $\al$ must be arbitrarily small, implying Property $\Dbf_n(0)$ must hold.

\section{Counting elliptic curves}\label{sec_EC}

\subsection{Context of counting elliptic curves}
Given a positive integer $q$, let $E(q)$ denote the number of isogeny classes of elliptic curves over $\Q$ with conductor $q$.
Let us clarify the relation between $E(q)$ and the number $E'(q)$ of isomorphism classes of elliptic curves over $\Q$ with conductor $q$. As used in \cite[p. 12]{DukKow00}, given any elliptic curve $E/\Q$, at most 8 $\Q$-isomorphism classes are $\Q$-isogenous to $E$; thus $E(q) \leq E'(q) \leq 8 E(q)$ (see \cite[Ch.IX, Example 6.4]{Sil09} for this fact).
Thus an analogous conjecture holds for $E'(q)$, and we  may proceed by considering either $E(q)$ or   $E'(q)$; in particular the statement of our results will hold for both $E(q)$ and $E'(q)$, up to a constant factor.

As we recalled in Conjecture \ref{conj_EC},  Brumer and Silverman \cite[Conj. 3]{BruSil96} conjectured  that for every $q$, 
$E(q) \ll_\ep q^{\ep} $
for all $\ep>0$.
Indeed,  Brumer and Silverman make a  more precise conjecture: that there exist absolute constants $\kappa, \kappa'$ such that for any finite set $S$ of primes, the number of elliptic curves with good reduction outside of $S$ is at most $\kappa M_S^{\kappa'/\log \log M_S}$, where $M_S = \prod_{p \in S}p$.
\begin{remark} Correspondingly, Brumer and Silverman's original formulation of Conjecture \ref{conj_class} is the more precise question of whether 
 for every field $K$ of degree $n$ one has 
\[ \log_\ell |\Cl_K[\ell]| \ll_{n,\ell} \log D_K / \log \log D_K.\]
See also \cite[Conj. 3.3.A, Conj. 3.3.B, Thm. 7.5]{BelLub17} for other possible specific rates of growth, and their relevance for counting the number of conjugacy classes of non-uniform   lattices in semisimple Lie groups.
\end{remark}

Upper bounds for $E(q)$ follow from bounding $3$-torsion in quadratic fields.
Define for every $X \geq 1$ the quantity
\[ H_3(X)  = \max_{K \in \Fscr_2(X)} |\Cl_K[3]|,\]
in which $\Fscr_2(X)$ denotes the family of quadratic fields (real or imaginary) with absolute discriminant at most $X$ (in absolute value). 
Then Brumer and Silverman showed in \cite[p. 99-100]{BruSil96} that for every $q$, 
\beq\label{BruSil_ptwise}
 E(q) \ll_\ep q^\ep H_3(1728q),
 \eeq
 for every $\ep>0$.
 This was then sharpened in the proof of  \cite[Thm. 4.5]{HelVen06} to 
 \beq\label{HelVen_ptwise}
 E(q) \ll_\ep q^\ep (H_3(1728q))^{2 \be/ \log 3},
 \eeq
 for every $\ep>0$, where $\be =  0.2782...$ and hence $2 \be/ \log 3 = 0.5064...$, so that $\Cbf_{2,3}(\Del)$ implies $E(q) \ll_\ep q^{(0.5064...)\Del+\ep}$. 
Originally the only bound available to Brumer and Silverman was the trivial bound $H_3(X) \ll X^{1/2+\ep}$ via (\ref{trivial_bound}); now that $\Cbf_{2,3}(1/3)$ is known by \cite{EllVen07}, it follows that 
\[ E(q) \ll_\ep q^{2 \be/(3 \log 3)+\ep} \ll_{\ep} q^{0.1688...+\ep} \]
for all $q$. This is currently the best result known toward Conjecture \ref{conj_EC}.

In addition, Brumer and Silverman have proved that Conjecture \ref{conj_EC} is true under the assumption of the Generalized Riemann Hypothesis and a weak version of the Birch-Swinnerton--Dyer Conjecture (\cite[Thm. 4]{BruSil96}). This is an interesting counterpart to the situation for bounding $\ell$-torsion, for which it appears that GRH implies partial results, but stops far short of proving Conjecture \ref{conj_class} in full (see \S \ref{sec_GRH}).

 From Theorem \ref{thm_fields_cl} and (\ref{HelVen_ptwise}), we make a simple observation, recalling the notation of Property \ref{property_D}.
\begin{thm}\label{thm_EC_6}
If Property $\Dbf_6(\Del)$ holds, then 
\[ E(q) \ll_{\ep} q^{6\be\Del/\log 3+\ep} \ll_\ep q^{(1.5193 \ldots)\Del + \ep}
\]
 holds for all $q \geq 1$. In particular, if it is known that for every $D$ there are at most $\ll_\ep D^{\ep}$ sextic fields $K/\Q$ of discriminant $D$, then Conjecture \ref{conj_EC} holds.
\end{thm}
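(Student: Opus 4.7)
The plan is to chain together three inputs already established in the excerpt: Theorem \ref{thm_fields_cl} for converting field counts into $\ell$-torsion bounds, the pointwise comparison (\ref{HelVen_ptwise}) of $E(q)$ with the maximum of $3$-torsion in quadratic fields, and finally elementary manipulation of the resulting exponents. No new ingredients are needed.

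First, I would apply Theorem \ref{thm_fields_cl} with $n=2$ and $\ell=3$. The hypothesis $\Dbf_6(\Del)$ is exactly the assumption $\Dbf_{n\ell}(\Del)$ in that theorem, so we conclude that Property $\Cbf_{2,3}(3\Del)$ holds. Unpacking Property \ref{property_C}, this means that for every $\ep_1>0$ there is a constant $C_{\ep_1}$ such that for every quadratic field $K$,
\[
|\Cl_K[3]| \leq C_{\ep_1} D_K^{3\Del + \ep_1}.
\]
Taking the maximum over $K \in \Fscr_2(X)$ immediately yields
\[
H_3(X) \ll_{\ep_1} X^{3\Del + \ep_1}.
\]

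Second, I would plug this into the pointwise inequality (\ref{HelVen_ptwise}) of Helfgott--Venkatesh, namely $E(q) \ll_{\ep_2} q^{\ep_2} (H_3(1728q))^{2\be/\log 3}$ for every $\ep_2>0$. Combining the two displays and absorbing the constant $1728$ into the implicit constants gives, for every $\ep_3 > 0$,
\[
E(q) \;\ll_{\ep_3}\; q^{\ep_2} \cdot q^{(3\Del + \ep_1)\cdot 2\be/\log 3} \;=\; q^{6\be\Del/\log 3 + \ep_3},
\]
by choosing $\ep_1,\ep_2$ small enough in terms of $\ep_3$. Numerically $6\be/\log 3 = 1.5193\ldots$, matching the claimed exponent.

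Finally, the second sentence of the theorem is the specialization $\Del = 0$: if Conjecture \ref{conj_DMC} is known in degree $6$, then $\Dbf_6(0)$ holds, and the bound above becomes $E(q) \ll_\ep q^\ep$ for every $\ep > 0$, which is precisely Conjecture \ref{conj_EC}. There is no genuine obstacle in this argument; the proof is essentially a bookkeeping exercise once Theorem \ref{thm_fields_cl} and the Helfgott--Venkatesh bound (\ref{HelVen_ptwise}) are in hand. The only point that warrants care is the quantifier handling between the various $\ep$'s, which is routine.
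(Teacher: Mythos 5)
Your proposal is correct and is exactly the paper's argument: the paper obtains Theorem \ref{thm_EC_6} as the immediate combination of Theorem \ref{thm_fields_cl} (with $n=2$, $\ell=3$, so $\Dbf_6(\Del)$ gives $\Cbf_{2,3}(3\Del)$) and the Helfgott--Venkatesh pointwise bound (\ref{HelVen_ptwise}), yielding the exponent $3\Del\cdot 2\be/\log 3 = 6\be\Del/\log 3$, with the case $\Del=0$ giving Conjecture \ref{conj_EC}. Your handling of the $\ep$'s and the constant $1728$ is fine and matches the intended bookkeeping.
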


\subsection{Average results}
One can also ask about an average   or asymptotic conjecture related to  Conjecture \ref{conj_EC}. Here we distinguish briefly between ordering elliptic curves by discriminant or by conductor. 
We first consider  ordering by discriminant. 
Brumer and McGuinness \cite[\S 5]{BruMcg90}  have conjectured that the number of elliptic curves over $\Q$ whose minimal discriminant has absolute value less than $X$ is asymptotically of size $cX^{5/6}$ for an explicitly predicted constant $c$ (see also Watkins \cite[Conj. 2.1]{Wat08}).
Each elliptic curve over $\Q$ in its Weierstrass form $y^2 = x^3-27c_4x - 54c_6$ with integers $c_4, c_6$ has discriminant $\Del$ with 
\beq\label{E_disc}
1728\Del = c_4^3 - c_6^2.
\eeq
(Here the discriminant $\Del$ and $c_4,c_6$ are the standard invariants as in \cite[p. 42 Ch. III \S 1]{Sil09}.)
   Very roughly speaking, we may think of the numerology as suggesting that for $\Del$ fixed, most of the solutions to the equation (\ref{E_disc}) must come from $c_4 \ll \Del^{1/3}$ and $c_6 \ll \Del^{1/2}$. 
Counting elliptic curves ordered by discriminant, Fouvry, Nair and Tenenbaum   \cite{FNT92} proved the weaker upper bound $\ll X^{1+\ep}$ for every $\ep>0$ and a lower bound of the order of magnitude $\gg X^{5/6}$.
Any upper bound that is $o(X)$ would imply that the integers that are discriminants of elliptic curves are zero-density among all integers (a difficult open problem; see \cite{You15}). 

We next consider ordering by conductor $q$.
Recall  that the same primes divide the discriminant and the conductor, and the conductor is a divisor of the discriminant; the precise relationship between the two is related to Szpiro's conjecture \cite{Szp90} (for example predicting that $\Del \ll_\ep q^{6+\ep}$). 
 Watkins \cite[Heuristic 4.1]{Wat08} adapted the argument of Brumer-McGuinness \cite[\S 5]{BruMcg90}  by analyzing heuristically how often a large power of a prime divides the discriminant, and proposed the following expectation:
\begin{conjecture}\label{conj_EC_avg}
Let $E'(q)$ denote the number of isomorphism classes of elliptic curves of conductor $q$. There exists a constant $c'>0$ such that 
\[\sum_{1 \leq q \leq Q} E'(q) \sim c' Q^{5/6}\]
as $Q \maps \infty$.
\end{conjecture}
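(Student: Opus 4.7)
The plan is to construct a heuristic derivation following the Brumer-McGuinness template for counting by discriminant, and then to translate from discriminant ordering to conductor ordering via a local $p$-adic analysis using the Kodaira-N\'eron classification. Rigorously proving the asymptotic appears far out of reach, but one can at least articulate what a proof would require.

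First, I would parameterize elliptic curves $E/\Q$ via pairs $(c_4,c_6) \in \Z^2$ with $\Delta = (c_4^3 - c_6^2)/1728 \neq 0$, subject to the local minimality conditions (roughly: no prime $p$ has $p^4 \mid c_4$ and $p^6 \mid c_6$ simultaneously with extra divisibility of $\Delta$). The Brumer-McGuinness heuristic counts minimal pairs with $|\Delta| \leq X$ and predicts an asymptotic of order $X^{5/6}$; this exponent arises because the natural region $|c_4| \ll \Delta^{1/3}, |c_6| \ll \Delta^{1/2}$ has total size $\Delta^{1/3+1/2} = \Delta^{5/6}$, which when integrated against $d\Delta$ over $|\Delta| \leq X$ gives $X^{5/6}$. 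The corresponding lattice-point count, with the minimality sieve, yields the constant in the discriminant asymptotic.

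Second, to convert to conductor ordering, I would analyze each prime $p$ separately using Tate's algorithm. For each Kodaira type, the pair $(v_p(c_4), v_p(c_6))$ determines both $v_p(\Delta)$ and the conductor exponent $f_p$. This gives a $p$-adic density $\mu_p$ on locally minimal equations recording the joint distribution of $(v_p(\Delta), f_p)$. Assembling these local factors, one obtains a Dirichlet series
\[
D(s) = \sum_{q\geq 1} E'(q)\, q^{-s}
\]
whose rightmost singularity should be a pole at $s = 5/6$, with residue given by an Euler product $c' = c_\infty \prod_p \rho_p$ where $c_\infty$ is an archimedean volume factor and $\rho_p$ captures the $p$-adic density of passing from minimal $(c_4,c_6)$ to conductor exponent $f_p$. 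A Tauberian theorem of Wiener-Ikehara type would then deliver the asymptotic $c' Q^{5/6}$.

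The main obstacle lives in two places. The first is establishing a genuine asymptotic (not merely the Fouvry-Nair-Tenenbaum upper bound $\ll X^{1+\ep}$) for the lattice-point count of minimal $(c_4,c_6)$ with $|\Delta| \leq X$: extracting the $X^{5/6}$ main term requires sharp control of integer points on the thin set $c_4^3 - c_6^2 = 1728\Delta$, with uniformity in $\Delta$. The second, and more severe, is uniform control across all primes when passing from local densities to the global count by conductor; curves where $\Delta$ greatly exceeds $N$ (many additive primes, or high prime-power divisibility in $\Delta$) must be controlled, and estimates of the required strength are essentially of the same order of difficulty as Szpiro's conjecture or the $abc$ conjecture. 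Even the matching upper bound $\ll_\ep Q^{5/6+\ep}$ appears beyond current unconditional methods; one would presumably first need a conductor analogue of the Fouvry-Nair-Tenenbaum discriminant bound.
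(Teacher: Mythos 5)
This statement is a conjecture in the paper (attributed to Watkins' Heuristic 4.1, which adapts the Brumer--McGuinness heuristic), so the paper gives no proof, and your heuristic outline --- counting minimal $(c_4,c_6)$ pairs with $|\Delta|\leq X$ to obtain the $X^{5/6}$ growth and then passing from discriminant to conductor by analyzing how often large prime powers divide the discriminant --- is exactly the route the paper cites. Your closing assessment of what remains open is also consistent with the paper, which records only the Duke--Kowalski average bound $\sum_{q\leq Q}E(q)\ll_\ep Q^{1+\ep}$ and Wong's lower bound $\gg_\ep Q^{5/6-\ep}$ as the known unconditional results.
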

 Duke and Kowalski \cite[Prop. 1]{DukKow00} have proved the weaker result that for $Q \geq 1$, 
\beq\label{DK_average_result}
 \sum_{1 \leq q \leq Q} E(q) \ll_\ep Q^{1+\ep}.
 \eeq
See also \cite{Won01a,Won01b}, which further proves a lower bound $\gg_\ep Q^{5/6-\ep}$. 
All these works \cite{DukKow00,FNT92,Won01a} argue by applying the asymptotic of Davenport and Heilbronn \cite{DavHei71} for the average of $|\Cl_K[3]|$.  
\begin{remark}\label{remark_EC}
It is worth noting that an argument truly is needed to deduce (\ref{DK_average_result}) or our theorem on moments. Armed only with (\ref{BruSil_ptwise}), the statement we can trivially deduce is that
\[ \sum_{q \leq Q} E(q)^k \ll_\ep Q^\ep \sum_{q \leq Q} H_3(1728 q)^k \ll_\ep  Q^{1+\ep} H_3(1728 Q)^k. \]
Since $H_3$ is a sup-norm, even one violation of a pointwise upper bound $|\Cl_K[3]| \leq D_K^\Del$ would prevent us from concluding that 
 the expression above is bounded by $\ll_\ep Q^{1+k\Del +\ep}.$ Similar considerations apply if we use (\ref{HelVen_ptwise}).
\end{remark}

\subsection{Moment results: proof of Theorem \ref{thm_EC}}

The proof of Theorem \ref{thm_EC} builds on the arguments in   \cite{BruSil96}, \cite{DukKow00} and \cite{HelVen06}; we begin by recalling the set-up of \cite{BruSil96}. An elliptic curve of conductor $q$ has good reduction outside of the primes dividing $q$, and so  it suffices to count the number $E_g'(q)$ of elliptic curves with good reduction outside of the set $S_q = \{p|q, \text{$p$ prime}\} \union \{2,3\}$, and for any fixed $\kappa\geq 1$ we write
\[ \sum_{q \leq Q} E'(q)^\kappa  \leq \sum_{q \leq Q} E_g'(q)^\kappa. \]
Let us fix $q$ and the corresponding set $S_q$. Then any elliptic curve $E/\Q$ counted by $E_g'(q)$ has 
\beq\label{disc_ad}
1728\Del_E = ad^6
\eeq
 for integers $a,d$ with $a$ being $6$-th power free, and such that $(c_6(E)/d^3,c_4(E)/d^2)$ is an $S_q$-integral point on the elliptic curve $\Escr_a$ defined by 
\[ \Escr_a : Y^3 = X^2 + a.\]
 Let us denote by $A(q)$ the set of values $a$ that arise in this manner as $E$ ranges over $E_g'(q)$. Furthermore, given any $S_q$-integral point $P$ on one such $\Escr_a$, let us denote by $C(P)$ the set of elliptic curves $E$ counted by $E_g'(q)$ that give rise to the point $P$ by following the procedure above. 
Brumer and Silverman's argument (see also \cite{DukKow00}) shows that 
\[  \sum_{q \leq Q} E_g'(q)^\kappa \leq \sum_{q \leq Q}  ( \sum_{a \in A(q)} \sum_{P} |C(P)| )^\kappa, \]
in which $P$ varies over the $S_q$-integral points on $\Escr_a$. For each $P$, Brumer and Silverman show  that $|C(P)| \leq 2^{\om(q)+5} \ll_\ep q^\ep$ for every $\ep>0$ (see \cite[p. 100]{BruSil96}, as usual recalling that $\om(q) \ll \log q/ \log \log q$). Moreover, they show  that for each such $a,q$, the number of $S_q$-integral points on $\Escr_a$ is 
$
\ll_\ep q^\ep |\Cl_{K_a}[3]|
$
 for every $\ep>0$, where $K_a$ is the quadratic field $\Q(\sqrt{-a})$. (See the line leading to \cite[Eqn. (4)]{BruSil96}; in their equation we note that $\# S = \#S_q \leq \omega(q) +2$, so that for any constant $c$ we have $c^{\#S} \ll  q^\ep$ for all $\ep>0$.) 
More recently, due to improved counts for integral points on a fixed Mordell elliptic curve (such as $\Escr_a$), we can improve this by \cite[Thm. 4.5]{HelVen06} to write that the number of $S_q$-integral points on $\Escr_a$ is 
\[ \ll_\ep a^\ep \exp\{ 2 \log_3 |\Cl_{K_a}[3]|(\be + \ep)\} \ll_\ep a^\ep |\Cl_{K_a}[3]|^{2\be / \log 3}, \]
for any $\ep>0$,
where $\be = 0.2782...$ is a numerical constant defined explicitly as $\be(0)$ in \cite[Thm. 3.8]{HelVen06}.
We note that $2\be/ \log 3 = 0.5064...$ is strictly smaller than 1, so that this is advantageous.

Thus we may conclude that for every $\ep>0$,
\[  \sum_{q \leq Q} E_g'(q)^\kappa \ll_\ep  Q^\ep \sum_{q \leq Q}  ( \sum_{a \in A(q)}a^\ep |\Cl_{K_a}[3]|^{2\be / \log 3} )^\kappa. \]
Now, as in the argument in \cite[\S 3]{DukKow00}, for each fixed $q$ we will pass from a sum over $a \in A(q)$ to a sum over the set of (distinct) corresponding square-free kernels $a'$; we set $A'(q) =  \{ a' : a \in A(q)\}$. For any elliptic curve $E \in E_g'(q)$   associated to $a$, so that the discriminant relation (\ref{disc_ad}) holds,  we see from (\ref{disc_ad}) and the fact that the corresponding  square-free kernel $a'$ is \emph{square-free} that  $|a'| \leq 1728q$ and $a' | \del q$, where 
\[\del = 6/\gcd(6,q).\]
In general, denoting by $s(a) = a'$ the square-free kernel of an integer, we have for any $a'$ that 
\[ 
\# \{ a \leq X : s(a) = a'\} \ll_\ep X^\ep 
\]
for all $X \geq 1$ and for all $\ep>0$;
see e.g. \cite[Lemma 4.4]{HBP17}. 
Since $a$ is $6$-th power free, we may apply this in our setting with $X\leq 2^5 3^5 q^5$, say. Thus there are at most $\ll_\ep q^\ep$ such values $a$ corresponding to each $a' \in A'(q)$.

  Thus we have shown that
\[
  \sum_{q \leq Q} E_g'(q)^\kappa \ll_\ep  Q^\ep \sum_{q \leq Q}  ( \sum_{a'|\del q}|\Cl_{K_{a'}}[3]|^{2\be / \log 3}  )^\kappa, 
\]
where we have used the fact that $a^\ep \ll q^\ep$ for any $\ep>0$.
On the right-hand side, we initially only consider square-free $a'$, but we can enlarge the right-hand side by considering any such divisors $a'$. 
Applying H\"older's inequality to the right-hand side, this becomes
\[
 \sum_{q \leq Q} E_g'(q)^\kappa \ll_\ep Q^\ep \sum_{q \leq Q} (d(\del q))^{\kappa-1}\sum_{a'|\del q}|\Cl_{K_{a'}}[3]|^{2\be \kappa / \log 3} \ll_\ep Q^{2\ep} \sum_{q \leq Q} \sum_{a' |\del q} |\Cl_{K_{a'}}[3]|^{2\be \kappa / \log 3},
\]
since for all integers $m$, the divisor function satisfies $d(m) \ll_\ep m^\ep$ for all $\ep>0$. 
Note that we may trivially enlarge the right-most quantity to
\beq\label{partial_sum_app}
\ll_\ep Q^{2\ep} \sum_{q \leq 6Q} \sum_{a' |q} |\Cl_{K_{a'}}[3]|^{2\be \kappa / \log 3} .
 \eeq
We apply the following elementary lemma. 
\begin{lemma}
For any sequence $\al_{n}$ of non-negative real numbers, define $A_t   = \sum_{n \leq t} \al_n$. Suppose it is known  that for all $t \geq 1$ we have
$
A_t \leq t^\Theta,
$
for some $\Theta \geq 1$.
Then for all $Q \geq 1$,
\[  \sum_{ q \leq Q} \sum_{n|q} \al_n  \ll_\Theta Q^{\Theta} \Lscr(\Theta) \]
in which $\Lscr(\Theta)= \log Q$ if $\Theta=1$ and $\Lscr(\Theta)=1$ if $\Theta>1$. 
\end{lemma}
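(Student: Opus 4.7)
The plan is to interchange the order of summation and then apply Abel (partial) summation to convert a sum weighted by $1/n$ into an integral of the partial sums $A_t$, for which we have the hypothesized bound. No single step should be hard; the only subtlety is tracking the logarithmic loss that appears precisely when $\Theta = 1$.

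First I would swap the order of summation in the double sum, writing
\[
 \sum_{q \leq Q} \sum_{n \mid q} \alpha_n \;=\; \sum_{n \leq Q} \alpha_n \, \#\{q \leq Q : n \mid q\} \;=\; \sum_{n \leq Q} \alpha_n \lfloor Q/n \rfloor \;\leq\; Q \sum_{n \leq Q} \frac{\alpha_n}{n}.
\]
Thus the lemma reduces to showing $\sum_{n \leq Q} \alpha_n/n \ll_\Theta Q^{\Theta-1} \mathcal{L}(\Theta)$.

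Next I would apply Abel summation, using that $A_t$ is the partial sum of the $\alpha_n$ and that $1/n$ is monotone in $n$. This gives the identity
\[
 \sum_{n \leq Q} \frac{\alpha_n}{n} \;=\; \frac{A_Q}{Q} + \int_1^Q \frac{A_t}{t^2}\, dt.
\]
Substituting the hypothesis $A_t \leq t^\Theta$ bounds this by
\[
 Q^{\Theta-1} + \int_1^Q t^{\Theta - 2}\, dt.
\]

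Finally I would split cases according to $\Theta$. If $\Theta > 1$, the integral equals $(Q^{\Theta-1}-1)/(\Theta-1) \ll_\Theta Q^{\Theta-1}$, and multiplying through by $Q$ from the first step gives the desired bound $\ll_\Theta Q^\Theta$. If $\Theta = 1$, the integral evaluates to $\log Q$, so the total is $\ll 1 + \log Q \ll \log Q$ (for $Q \geq 2$; the case $Q < 2$ is trivial), and multiplying by $Q$ yields the $Q \log Q$ bound. This matches the definition of $\mathcal{L}(\Theta)$ in both regimes and completes the argument. The only ``obstacle'' is a minor bookkeeping one: verifying that the boundary term $A_Q/Q$ is absorbed in both cases, which it is since $A_Q/Q \leq Q^{\Theta-1}$.
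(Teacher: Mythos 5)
Your proof is correct and follows essentially the same route as the paper: interchange the order of summation, bound $\lfloor Q/n\rfloor \leq Q/n$, apply partial summation to $\sum_{n\leq Q}\alpha_n/n$, and split the integral according to $\Theta=1$ or $\Theta>1$. (Indeed, your Abel summation identity carries the correct plus sign in front of the integral, whereas the paper's displayed formula has a harmless sign slip that does not affect its final bound.)
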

\begin{proof}First note that (using positivity)
\[
  \sum_{ q \leq Q} \sum_{n|q} \al_n = \sum_{n \leq Q} \al_n \sum_{\bstack{q \leq Q}{n|q}} 1 =\sum_{n \leq Q} \al_n \lfloor Q/n \rfloor 
 \leq  Q \sum_{n \leq Q} \frac{\al_n}{n}.
\]
 Then by partial summation and the assumption on $A_t$ we have 
\[ \sum_{n \leq Q} \frac{\al_n}{n} = A_Q Q^{-1} + \int_1^Q A_t t^{-2} dt  \leq A_Q Q^{-1}  + \int_1^{Q} t^{\Theta-2} dt
\ll_\Theta Q^{\Theta-1} \Lscr(\Theta),
\]
which suffices to prove the lemma.
\end{proof}

We apply the lemma in (\ref{partial_sum_app}) with $\al_n  = |\Cl_{K_{n}}[3]|^{2\be \kappa / \log 3}$ and $\Theta = \Theta_k$ the exponent assumed to hold in (\ref{assume_k_moment_Cl3}) for the value $k = 2\be \kappa / \log 3$. Note that for any $k>0$,  $\Theta_k \geq 1$ since $|\Fscr_2(X)| \gg X$ and $|\Cl_K[3]| \geq 1$ for all fields $K$. 
We may conclude from (\ref{partial_sum_app}) that 
\[ \sum_{q \leq Q} E_g'(q)^\kappa \ll_{k,\ep}  Q^{2\ep} Q^{\Theta_k} \Lscr(\Theta) \ll_{k,\ep} Q^{\Theta_k +2 \ep}\]
for all $\ep>0$.
This suffices to prove the theorem, upon noting that 
\[ \kappa = \frac{k \log 3}{2\be} =k \cdot \ga= k \cdot 1.9745... \]


Corollary \ref{cor_EC} follows from Theorem \ref{thm_EC} immediately, upon applying the results of \cite[Cor. 1.4]{HBP17}. To situate the strength of Corollary \ref{cor_EC}, we briefly observe that for any $k$ such that $\ga k \geq 1$, by combining 
(\ref{DK_average_result}) with the pointwise bound $E(q) \ll_\ep q^{1/(3\ga) + \ep}$ in (\ref{Eq_EV}), 
\beq\label{trivial}
 \sum_{q \leq Q} E(q)^{k\ga} \leq \max_{q \leq Q} E(q)^{k\ga - 1} \sum_{q \leq Q} E(q) \ll Q^{1 + (k\ga-1)/(3\ga) + \ep}. 
 \eeq
For example, in the case $k=2/\ga \geq 1$ the exponent (ignoring $\ep$) in (\ref{trivial}) is $1+1/(3\ga) = 1.1688...$ which is larger than the exponent $1+(2/\ga-1)/3 =1.0043 ...$ obtained by applying Theorem \ref{thm_EC}, and applying $|\Cl_K[3]|\ll D_K^{1/3+\ep}$ to one factor and using Davenport-Heilbronn to bound the remaining average. 

As another example, in the case $k=2$, (\ref{trivial}) gives the exponent $5/3-1/(3\ga) = 1.4978...$ while Corollary \ref{cor_EC} gives 
the exponent $23/18=1.2777...$.

\section{Known results toward Conjecture \ref{conj_class}, and relation to GRH}\label{sec_previous}

We have focused thus far on heuristics to support Conjecture \ref{conj_class}. To conclude, we provide an overview of 
recent progress toward the conjecture, focusing first on results that hold for \emph{all} fields of a given degree.  Then we focus on a relationship to the Generalized Riemann Hypothesis.
We will use the notation $\Cbf_{n,\ell}(\Del)$ defined in Property \ref{property_C}.

\subsection{Composite $\ell$}\label{sec_composite}
First we remark that  while we have stated Conjecture \ref{conj_class} only for primes $\ell$, we expect it to hold for all integers $\ell \geq 1$. Indeed, let $K/\Q$ be a number field. Since $\Cl_K$ is a finite abelian group, it is isomorphic to
\[ 
C_{\ell_1^{a_{1,1}}} \times \cdots \times C_{\ell_1^{a_{1,r_1}}} \times \cdots \times C_{\ell_s^{a_{s,1}}} \times \cdots \times C_{\ell_s^{a_{s,r_s}}},
\]
for distinct primes $\ell_1,\ldots, \ell_s$,
and cyclic groups $C_{\ell^a} \isom \Z/ \ell^a \Z$. For any prime $\ell | \, |\Cl_K|$, letting $r$ be the number of cyclic groups in this decomposition corresponding to $\ell$,  then $|\Cl_K[\ell]| = \ell^r$. (If $\ell \ndiv |\Cl_K|$ then this remains true, with $r=0$ and $|\Cl_K[\ell]|=1$.)

 Furthermore, by the above decomposition,  $|\Cl_K[\ell]|$ is multiplicative as a function of $\ell$, that is, for integers $(\ell_1,\ell_2)=1$,
$
   |\Cl_K[\ell_1\ell_2]|  = |\Cl_K[\ell_1]| \cdot |\Cl_K[\ell_2]|.
$
 Consequently if $(\ell_1,\ell_2) =1$ and $\Cbf_{n,\ell_1}(\varpi_1)$ and $\Cbf_{n,\ell_2}(\varpi_2)$ are known then $\Cbf_{n,\ell_1\ell_2}(\varpi_1 + \varpi_2)$ holds. This is of course only interesting if $\varpi_1+ \varpi_2 < 1/2$. 

Finally,  for any prime $\ell | \, |\Cl_K|$, we have 
$
|\Cl_K[\ell^k]| \leq |\Cl_K[\ell]|^{k} 
$ for any integer $k \geq 1$.
(If $\ell \ndiv |\Cl_K|$ we can say this trivially holds, since both sides are 1.)
Indeed, more generally, for any $k' \geq k \geq 1$, 
 $|\Cl_K[\ell^{k'}]| \leq |\Cl_K[\ell^k]|^{k'/k}.
$
Thus if $\Cbf_{n,\ell^k}(\varpi)$ is known for some $k \geq 1$, then $\Cbf_{n,\ell^{k'}}(k'\varpi/k)$ holds for all $k' \geq k$. Again, this is only interesting if $k'\varpi/k <1/2$. As a consequence, if for a given degree $n$, Conjecture \ref{conj_class} holds for a prime $\ell$, then for all $k \geq1$, $|\Cl_K[\ell^k]| \ll_{n,\ell,k,\ep} D_K^\ep$ for all $\ep>0$.

\subsection{Pointwise results, prime and prime power $\ell$} 
We remarked that Gauss proved $\Cbf_{2,2}(0)$; from the discussion above, $\Cbf_{2,2^k}(0)$ follows for all $k \geq 1$. For  $n=2$ and $\ell=3$, initial progress occurred in \cite{HelVen06, Pie05, Pie06} and the current record of $\Cbf_{2,3}(1/3)$ is held by \cite{EllVen07}, which also proved 
 $\Cbf_{3,3}(1/3)$ and
 $ \Cbf_{4,3}(1/2-\del)$ for a small $\del>0$ (Ellenberg, personal communication, computed one can take $\del=1/168$ if $K$ is non-$D_4$). Bhargava et al.  \cite{BSTTTZ17} have established
$\Cbf_{n,2}(0.2784...)$ for $n=3,4$ and $\Cbf_{n,2}(1/2 - 1/2n)$ where $n \geq 5$, the first  results to hold for all fields of arbitrarily high degree.

\subsection{Pointwise results, composite $\ell$}  
By multiplicativity, combining Gauss's work with $\Cbf_{2,3}(1/3)$ thus implies $\Cbf_{2,2^k 3}(1/3)$ for all $k \geq 1$.

\subsection{Results conditional on GRH}\label{sec_GRH}
Much recent progress has relied on finding sufficiently many ``small'' rational primes that split completely within $K$, based on an insight credited to K. Soundararajan and P. Michel, and codified in work of Helfgott and Venkatesh \cite{HelVen06} and Ellenberg and Venkatesh \cite{EllVen07}, from which we quote the following:
\begin{letterthm}[{\cite[Lemma 2.3]{EllVen07}}]\label{thm_EV}
If a field $K$ of degree $n$ simultaneously  has the properties that 
\begin{enumerate}
\item the minimum multiplicative Weil height of a generating element of $K$ is large, say 
\beq\label{Weil}
 \inf \{ H_K(\al) : K = \Q(\al) \} \gg D_K^\eta, 
 \eeq
\item and $\gg M$ rational primes $p \leq D_K^{\eta/\ell}$ split completely in $K$, 
\end{enumerate}
then $|\Cl_K[\ell]| \ll_{n,\ell,\eta,\ep} D_K^{1/2 + \ep}M^{-1}$, for every $\ep>0$. 
\end{letterthm}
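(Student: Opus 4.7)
The strategy is to upgrade the trivial Minkowski-based bound $|\Cl_K[\ell]| \ll_{n,\epsilon} D_K^{1/2+\epsilon}$ by showing that, under hypotheses (1) and (2), each $\ell$-torsion class contains approximately $M$ distinct integral ideals of norm $\ll_n D_K^{1/2+\epsilon}$. Dividing the standard ideal-count bound $\#\{\mathfrak{b} \subseteq \mathcal{O}_K : N(\mathfrak{b}) \leq X\} \ll_n X(\log X)^{n-1}$ by this multiplicity per class will then yield the desired improvement by a factor of $M^{-1}$.

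To produce many representatives per class, I will begin with Minkowski: for each $c \in \Cl_K[\ell]$ I fix an integral $\mathfrak{a}_c \in c$ with $N(\mathfrak{a}_c) \ll_n D_K^{1/2}$, and write $\mathfrak{a}_c^\ell = (\alpha_c)$ with $\alpha_c \in \mathcal{O}_K$ normalized modulo $\mathcal{O}_K^\times$ (via a Dirichlet fundamental domain) so that each archimedean absolute value is comparable to $N(\mathfrak{a}_c)^{d_v/n}$, giving $H(\alpha_c) \ll_n D_K^{1/2}$ up to logarithmic factors. For each of the $M$ completely split rational primes $p \leq D_K^{\eta/\ell}$, with factorization $p\mathcal{O}_K = \mathfrak{p}_1 \cdots \mathfrak{p}_n$, one can manufacture additional representatives of $c$ from the $\alpha_c$ using the split prime structure. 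The cleanest framing passes through the Kummer-theoretic embedding
\[
\Cl_K[\ell] \hookrightarrow K^\times/\bigl(\mathcal{O}_K^\times (K^\times)^\ell\bigr)
\]
together with the local reduction maps $\alpha_c \mapsto \alpha_c \bmod \mathfrak{p}_j$, which land in $\mathbb{F}_p^\times/(\mathbb{F}_p^\times)^\ell \cong \Z/\ell\Z$ since $p$ is split and $\mathcal{O}_K/\mathfrak{p}_j \cong \mathbb{F}_p$. These local separators, aggregated over the $M$ split primes, supply the needed multiplicity.

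Hypothesis (1) will be used to rule out collisions. If two distinct classes $c \neq c'$ produced the same representative under the construction above, then the ratio $\alpha_c \alpha_{c'}^{-1}$ would be a non-unit element of $K^\times$ of Weil height $\ll_n D_K^{1/2+\epsilon}$ whose principal ideal is an $\ell$-th power; the lower bound $D_K^\eta$ on Weil heights of generators of $K$ forces any such element either to be a global $\ell$-th power times a unit (so $c=c'$) or to lie in a proper subfield of $K$, which one handles by a standard subfield argument (the number of subfields is bounded in terms of $n$, and each contributes a polynomially bounded amount). Assembling the pieces will yield $M \cdot |\Cl_K[\ell]| \ll_{n,\ell,\eta,\epsilon} D_K^{1/2+\epsilon}$, as required.

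The hard part will be the middle step: constructing, for each class, about $M$ genuinely distinct representatives whose norms stay $\ll D_K^{1/2+\epsilon}$, and verifying that the Weil height condition (1) eliminates accidental identifications. This is precisely where hypotheses (1) and (2) must interlock — the split primes alone are too few (relative to the full class group) to give a gain, and the Weil height alone cannot see the class group structure, but together, through the Kummer pairing and the local reductions, they combine into a genuine separation of $\ell$-torsion classes.
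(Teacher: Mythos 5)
Your proposal does not reach the theorem, and the mechanism it rests on is not the one that makes hypotheses (1) and (2) work. The argument behind the theorem (the paper sketches it for imaginary quadratic fields and refers to Ellenberg--Venkatesh for the general case) is a \emph{coset} count, not a count of small ideals per class: set $H=\Cl_K[\ell]$ and let $\pfrak_1,\ldots,\pfrak_M$ be degree-one primes above the $M$ split rational primes $p_i\le D_K^{\eta/\ell}$. One shows these primes lie in pairwise distinct cosets of $H$, so $[\Cl_K:H]\ge M$ and $|\Cl_K[\ell]|=|\Cl_K|/[\Cl_K:H]\ll_{n,\ep} D_K^{1/2+\ep}M^{-1}$. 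The hypotheses enter only at the collision step: if $\pfrak_iH=\pfrak_jH$ then $(\pfrak_i\pfrak_j^{-1})^\ell$ is principal, and after clearing denominators and balancing by units its generator has height only on the order of $D_K^{\eta}$ (because the primes were capped at $D_K^{\eta/\ell}$ and are raised to the $\ell$-th power); hypothesis (1) then prevents this element from generating $K$, and the complete splitting of $p_i$ rules out its lying in a proper subfield, giving the contradiction. In the quadratic illustration this is exactly the impossibility of $4(p_1p_2)^\ell=u^2+Dv^2$ for $p_1,p_2<\tfrac14 D^{1/(2\ell)}$.

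Two concrete gaps in your route. First, the multiplicity mechanism is missing and the device you propose in its place is flawed: multiplying a Minkowski representative $\mathfrak{a}_c$ by a split prime ideal \emph{changes} the ideal class, so the split primes do not hand you $\gg M$ small ideals inside the same $\ell$-torsion class; and the ``local separators'' cannot repair this, since $\F_p^\times/(\F_p^\times)^\ell$ is trivial unless $\ell\mid p-1$, a congruence that complete splitting in $K$ does not impose unless $\zeta_\ell\in K$, the reduction is not even well defined on $K^\times/\O_K^\times(K^\times)^\ell$ (units need not reduce to $\ell$-th powers), and even granting nontrivial targets, distinguishing classes by local data would bound $|\Cl_K[\ell]|$ by the number of data vectors, a quantity that \emph{grows} with $M$ rather than producing the factor $M^{-1}$. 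Second, your use of hypothesis (1) is miscalibrated: the ratios $\alpha_c\alpha_{c'}^{-1}$ of Minkowski-sized elements have height on the order of $D_K^{1/2}$ (larger still in the $H_K$ normalization), which lies \emph{above} the threshold $D_K^{\eta}$ (one always has $\eta\le 1/2$, and unconditionally only $\eta\ge 1/(2(n-1))$ is known), so no dichotomy ``unit times $\ell$-th power, or in a proper subfield'' follows and no contradiction is available. In the genuine argument the small element is manufactured from the primes of size at most $D_K^{\eta/\ell}$ --- that is precisely why condition (2) restricts the primes to that range, a restriction your proposal never exploits --- so the claimed interlocking of (1) and (2) is not actually realized in your sketch.
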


The idea underlying this theorem can be most easily seen in the case of $K$ an imaginary quadratic field of discriminant $-D$ and $\ell$ an odd prime. Letting $H = \Cl_K[\ell]$, we of course have 
\beq\label{cosets}
|H| = |\Cl_K| / [ \Cl_K:H],
\eeq
 so that to show that $|H|= |\Cl_K[\ell]|$ is small it suffices to show that $H$ has many cosets in $\Cl_K$. Suppose that $p_1,p_2$ are distinct rational primes with $p \ndiv 2  D$  that split in $K$ as $p_1 = \pfrak_1 \pfrak_1^\sig$, $p_2 = \pfrak_2 \pfrak_2^\sig$. If $\pfrak_1 H = \pfrak_2H$ then it follows that $\pfrak_1 \pfrak_2^\sig \in H$, so that $(\pfrak_1\pfrak_2^\sig)^\ell$ is a principal ideal, say $((u + v \sqrt{-D})/2)$ for some rational integers $u,v$. Upon taking norms, this shows that 
 \beq\label{key_eqn}
  4(p_1p_2)^\ell = u^2 + Dv^2
  \eeq
  for some rational integers $u,v$. Now if we had specified to begin with that $p_1,p_2 < (1/4)D^{1/(2\ell)}$, such a relation could not hold (since it would impose $v=0$ and $4(p_1p_2)^\ell$ cannot be a square of an integer), and we must have that $\pfrak_1$ and $\pfrak_2$ represent different cosets of $H$.
  
Now suppose we have $M$ distinct rational primes $p_1,\ldots, p_M < (1/4) D^{1/(2\ell)}$ with $p \ndiv 2D$, each of which splits in $K$. By the above argument, the cosets $\pfrak_jH$ must all be distinct for $j=1,\ldots, M$, so that by (\ref{cosets}), for every $\ep>0$,
\[ |\Cl_K[\ell]| \leq |\Cl_K| M^{-1} \ll_\ep D^{1/2+\ep} M^{-1}.\]
The fully general case of considering all degree $n$ fields requires much more sophisticated machinery; see \cite{EllVen07}.

How can we apply Theorem \ref{thm_EV} in practice? Regarding the parameter $\eta$ in (\ref{Weil}), the lower bound $\eta \geq 1/(2(n-1))$ is known for  all degree $n$ fields (see \cite[Lemma 2.2]{EllVen07} or \cite[Thm. 1]{Sil84}). Although the Chebotarev density theorem shows that a positive proportion of primes split completely in $K$, it is notoriously difficult to  find \emph{small} split primes (i.e. to show $M \gg D_K^{\eta/\ell}/\log D_K^{\eta/\ell}$, which is expected to be true). In general this can only be done effectively within a certain field $K$ by assuming GRH for the Dedekind zeta function associated to the Galois closure $\tilde{K}$, which implies an effective version of the Chebotarev density theorem, with a good explicit error term. Taken together, these considerations lead to   Ellenberg and Venkatesh's result in \cite{EllVen07} that $\Cbf_{n,\ell}(1/2 - 1/(2\ell(n-1)))$ holds for all $n,\ell$ if GRH is assumed; we will call this GRH-quality savings. This opens several avenues for possible progress, such as: first, try to remove the dependence on GRH, and second, try to improve the benchmark of GRH-quality savings, possibly by showing that $\eta$ can be taken to be larger in (\ref{Weil}).

\subsection{Average and ``almost all'' results}\label{sec_review_avg}
Recent work has been able to obtain savings on $|\Cl_K[\ell]|$ as strong as the GRH-quality savings described above (or stronger), but without assuming GRH, at the cost of showing that it holds only for almost all fields within a certain family.
This includes remarks of Soundararajan \cite{Sou00} on imaginary quadratic fields, and also recent work of Heath-Brown and the first author \cite{HBP17}. The latter work even does better than GRH-quality savings, by proving for each prime $\ell \geq 5$ the unconditional bound 
$|\Cl_K[\ell]| \ll_{\ell,\ep}  D_K^{1/2 - 3/(2\ell+2)+\ep}
$ for all but a possible density zero family of imaginary quadratic fields. This is achieved by nontrivial use of averaging over the discriminant.

 For each degree $n \leq 5$, Ellenberg and the first and third authors \cite{EPW17} developed a new combination of field-counting and sieving to show that GRH-quality savings on $|\Cl_K[\ell]|$ holds unconditionally
 for all but a possible density zero exceptional family of degree $n$ extensions of $\Q$ (non-$D_4$ in the case $n=4$).  This work relied in particular on precise counts for fields with certain properties, such as a count for fields in which a fixed prime splits completely, where the count has a power-saving error term with explicit dependence on the prime. The strategy of \cite{EPW17} has also been adapted in work of Frei and Widmer \cite{FreWid17} to prove results on average over the special family of totally ramified cyclic extensions of any fixed number field. 
In general, for degrees $n \geq 6$, methods do not yet exist to count all number fields of degree $n$ precisely enough for the strategies of \cite{EPW17} to be carried out.
 
 Another approach is to tackle the effective Chebotarev density theorem directly, without assuming GRH. This has been carried out by the authors in \cite{PTBW20}, which proves a new effective Chebotarev density theorem valid for almost all members of certain families of fields, and consequently proves GRH-quality savings hold for $\ell$-torsion, for all $\ell \geq 1$, in almost all such fields. (The families of fields are too numerous to describe here, but include for example:  totally ramified cyclic extensions of $\Q$, in which case the result is unconditional;  degree $n$ $S_n$-fields with squarefree discriminant, conditional for $n \geq 5$ on the strong Artin conjecture and certain counts for number fields; degree $n$ $A_n$-fields, conditional on the strong Artin conjecture.) We note that this approach has recently been adapted by An \cite{An18} to certain families of $D_4$-quartic fields. We note that an interesting contrast of \cite{PTBW20} to \cite{EPW17} is that while it also depends heavily on counting number fields, it only requires rough counts. That is, given a family of fields, at a key step in \cite{PTBW20} it suffices that one can show that at most $\ll X^\be$ fields in the family have any fixed discriminant $X$, and at least $\gg X^\al$ fields in the family have bounded discriminant $\leq X$, where $\al>\be$. The fact that such rough counts suffice allows \cite{PTBW20} to prove results for many new families of fields. See also Remark \ref{remark_deg_n} on the surprising efficiency of this method.

As mentioned above, the bound  $\eta \geq 1/(2(n-1))$ for the exponent $\eta$ in the expression (\ref{Weil}) in property (1)  is uniformly true for all degree $n$ extensions. For fields in which $\eta$ can be taken to be larger, one could also increase the savings in the $\ell$-torsion count by increasing $M$ in property (2). This interesting strategy for improving the very notion of ``GRH-quality savings'' has been proposed by Ellenberg \cite{Ell08} and taken up in recent work of Widmer \cite{Wid18}. One underlying question is which families can admit an improved uniform lower bound for $\eta$, or an improved lower bound on $\eta$ for almost all fields in the family. Very recently Frei and Widmer \cite{FreWid18x} have strengthened this approach, and their ideas combined with the methods of \cite{EPW17, PTBW20} improve the average upper bounds in certain results of those papers. For example, they can improve the relevant exponent $1/2 -1/(2\ell(n-1))$ in \cite{EPW17} for $n \geq 5$ to $\approx 
1/2 - 1/(\ell n)$; see \cite[Thm. 1.2]{FreWid18x}.

The above results apply to average or ``almost all'' results. But if we turn to pointwise results, is it even true that all fields of degree $n$ should have $\eta$ strictly larger than $1/(2(n-1))$? In some restricted families of degree $n$ fields, no such improvement is possible; see \cite{FreWid17}. Such considerations of enlarging $\eta$ relate also to conjectured uniform upper bounds such as $\eta \leq 1/2$, suggested by  Ruppert \cite{Rup98} (and proved to hold for ``almost all'' fields in many families of fields, by work of the authors \cite{PTBW20}). If true, this restriction on $\eta$ would imply a limit for how far the   ``small split prime approach'' can go in bounding $\ell$-torsion (e.g. the best possible would be $\Cbf_{n,\ell}(1/2 - 1/2\ell)$), without additional information or new ideas.

\subsection{Higher moments}
Theorem \ref{thm_CLM_torsion} makes a clear case for studying higher moments of $\ell$-torsion over families of fields. The first paper to do so was by Heath-Brown and the first author \cite{HBP17}, which proved nontrivial upper bounds for $k$-th moments of $\ell$-torsion in imaginary quadratic fields for odd primes $\ell$. Very recent work of Frei and Widmer \cite{FreWid18x} now makes progress on upper bounds for $k$-th moments in certain other settings, e.g. for $\ell$-torsion in (imaginary and real) quadratic fields, or cyclic cubic fields, by simultaneously incorporating ideas for improving (on average) the lower bound in (\ref{Weil})  into the settings of certain methods in \cite{EPW17, PTBW20}.

\subsection{Asymptotics} For recent work on asymptotics, see \S \ref{sec_CLM_predictions}.

\section{Appendices}\label{sec_appendices}
\subsection{Further remarks on moment bounds in a general setting}\label{sec_meas_space}

In the context of Theorem \ref{thm_CLM_torsion} we argued explicitly using the function $V: K \mapsto |\Cl_K[\ell]|$ mapping fields (ordered by discriminant) to positive real numbers. This was to make the argument clear in a setting where there could be more than one field with a fixed discriminant, and where the underlying finite set $\Fscr(X)$ changes as $X$ grows. We include further general remarks here, which clarify whether one needs uniformity in the constants appearing in the moments, as we anticipate there will be further applications for this perspective.

Let $(\Mcal,\mu)$ be a ($\sigma$-finite) measure space as before and consider the spaces $L^p(E)$ for $\mu(E)<\infty$ and $1 \leq p \leq \infty$. If $f \in L^q(E)$ then $f \in L^p(E)$ for all $1 \leq p \leq q$; indeed H\"older's inequality shows that 
\[ \|f \|_{L^p(E)} \leq \mu(E)^{1/p - 1/q} \|f\|_{L^q(E)}.\]
In particular, if $f \in L^\infty(E)$ then $f \in L^p(E)$ for all $1 \leq p \leq \infty$ with 
\beq\label{gen_infty}
\|f\|_{L^p(E)} \leq \mu(E)^{1/p} \|f\|_{L^\infty(E)}.
\eeq

We are interested in the converse direction. Let us make the strong assumption that there exist  constants $B,M$ such that for an infinite sequence of arbitrarily large $p \geq 1$ there exists a constant $c_p \leq M$ such that for all $f \in L^p(E)$,
\beq\label{gen_assp} \|f\|_{L^p(E)} \leq c_p \mu(E)^{B/p}.
\eeq
Then we claim $f \in L^\infty(E)$ and moreover $\|f\|_{L^\infty(E)} \leq M$.

Indeed, to show $f \in L^\infty(E)$ we must show there exists $C$ such that $\mu(\{ x : |f(x)| > C\})=0$. By Chebyshev's inequality (\ref{int_id}) and our assumption (\ref{gen_assp}), for every $\al>0$,
\[ \mu(\{ x \in E : |f(x)| > \al\}) \leq \frac{c_p^p}{\al^p} \mu(E)^B \]
for an infinite sequence of arbitrarily large $p \geq 1$. Let us choose $\al_0$ sufficiently large such that $\al_0 >M$. Then given any $\ep'>0$, we may take $p$ sufficiently large (depending on $M, \al_0, \mu(E), B$) that 
 \[\mu(\{ x \in E: |f(x)| > \al_0\}) \leq (M/\al_0)^{p} \mu(E)^B < \ep'. \]
Thus we may conclude that $f \in L^\infty(E)$, with $\|f\|_{L^\infty(E)} \leq M$. 
Consequently, (\ref{gen_infty}) holds for all $1 \leq p \leq \infty$. In particular, this shows that if the apparently weaker estimate (\ref{gen_assp}) holds for arbitrarily large $p$, this implies the apparently stronger estimate (\ref{gen_infty}); that is to say, assuming that (\ref{gen_assp}) holds for some $B$ and an infinite sequence of arbitrarily large $p$ is equivalent to assuming it holds for $B=1$ and all $1 \leq p \leq \infty$. 

These ideas apply to the case where $E = \{1,2, 3, \ldots, X\}$ and $\mu$ is counting measure, so that $\mu(E)=X$. 
They can also be adapted to the setting in which we are studying moments of a function $V$ mapping fields (ordered by discriminant, for example) to positive real numbers. Let $X$ be fixed. If it is known that there is some $\al$ and some $d_k \leq M^k$ for a uniform constant $M$ such that
\[ 
\| V\|_{\ell^k(\Fscr(X))}^k =  \sum_{K \in \Fscr(X)} V(K)^k \leq d_k |\Fscr(X)|^{\al}
\]
holds for a sequence of arbitrarily large $k$,
then this holds  with $\al=1$ for all $1 \leq k \leq \infty$, and indeed these two statements are equivalent to each other, and equivalent to the statement that $\|V\|_{\ell^\infty(\Fscr(X))} \leq M$. 

Above, we made the assumption in (\ref{gen_assp}) that there exists a uniform upper bound $M$ such that $c_p \leq M$ for all $p$ in  the sequence of arbitrarily large $p$. (Without this assumption, the $L^p$ norms can blow up and  the claim $f \in L^\infty(E)$ need not hold.)
However, the reader may note that in our use of (\ref{CL_k_upper}) in Theorem \ref{thm_CLM_torsion} we made no such assumption of an upper bound on the constant $c_{n,\ell,k,\al}$ being uniform in $k$. Indeed, we do not require such an assumption because in Theorem \ref{thm_CLM_torsion} we   prove not a uniform upper bound but merely show the growth with respect to $X$ is slow as $X \maps \infty$. 

In the language of the model setting above,  let $\Mcal=\Z$, $E=(X/2,  X] \intersect \Z$ for  $X \geq 1$  and let $\mu$ be counting measure. Let $f \in \ell^k((X/2,X]\intersect \Z)$ be given. Assume that there is some non-decreasing  function $F \geq 1$ such that $|f(x)| \leq F(x)$ for all $x$. Also assume 
 that there exists a  constant $B$ and an infinite sequence of arbitrarily large $k \geq 1$ with corresponding constants $c_k$  such that for all $X \geq 1$,
\[ \|f\|_{\ell^k((X/2,X]\intersect \Z)} \leq c_k X^{B/k}.
\]
Then for any fixed $\del>0$, for every $X \geq 1$,
\[ |\{ x \in (X/2,X] \intersect \Z : |f(x)| > x^\del \} |\leq \frac{c_k^k }{(X/2)^{\del k}} X^{B/k}.\]
Thus for any fixed $\del$, regardless of how big each constant $c_k$ is, we may take a fixed $k_0$ sufficiently large (determined only by $\del, B$) that $B/k_0 - \del k_0 < 0$, and consequently $c_{k_0}^{k_0} 2^{\del k_0} X^{B/k_0 - \del k_0} <1$ for $X $ sufficiently large with respect to $\del, B, c_{k_0}, k_0$. The set of constants $c_k$ was determined by the function $f$, so we will denote this dependence as a dependence on $f$.
This means that there exists some constant $X_0(f,\del,B)$ such that for $X \geq X_0(f,\del, B)$, the set $\{ x \in (X/2,X]\intersect \Z: |f(x)| > x^\del \} $ is empty. In other words, for any $\del>0$, the only $x$ such that  $|f(x)|> x^\del$ must have $x \leq X_0(f,\del,B)$ and hence $|f(x)| \leq F(x) \leq F(X_0(f,\del,B))$, which is a constant $\geq 1$ depending only on $f, \del, B$. In total, we have proved that given any $\del>0$ there exists a constant $C_\del = C_\del(f,B)$ such that for all $x \in \Z_{\geq 1}$, $|f(x)| \leq C_\del x^\del$ (it suffices to take $C_\del=F(X_0(f,\del,B))$). This argument, adapted to families of fields, underlies Theorem \ref{thm_CLM_torsion}.

\begin{remark}\label{remark_1}
The above discussion also verifies our earlier remark that the following statements are equivalent: (i)  (\ref{CL_k_upper}) holds (for all $X \geq 1$) for a particular fixed $\al$ and an infinite sequence of arbitrarily large $k$;  (ii)  (\ref{CL_k_upper}) holds (for all $X \geq 1$) for all $\al = 1 + \ep_0$ with $\ep_0>0$  and all integers $k \geq 1$. Certainly the   statement (ii) implies statement (i). In the other direction, the proof we gave for Theorem \ref{thm_CLM_torsion} (with $k$ ranging over all integers $k \geq 1$) adapts easily to the case where $k$ ranges over an infinite sequence of arbitrarily large $k$, and by this method we can prove that statement (i) implies that for every $\ep_0> 0$, $|\Cl_K[\ell]| \leq C_{\ep_0} D_K^{\ep_0}$ holds uniformly for $K \in \Fscr$. Averaging this pointwise upper bound then implies statement (ii), completing the equivalence.
\end{remark}

\subsection{Counting quartic $D_4$-fields with fixed discriminant}\label{sec_app_D4}
We briefly prove that property $\Dbf_4(D_4;0)$ holds; that is, for every $D \geq 1$ and for every $\ep>0$ there are at most $\ll_\ep D^\ep$ quartic $D_4$-fields of discriminant $D$. We follow \cite[\S 4]{Bai80}, except we fix a discriminant $D$ instead of considering all discriminants up to $X$.
Any quartic $D_4$-field $K_4$ is a quadratic extension of a quadratic field $K_2$.  We have that $K_4$ corresponds to a quadratic ray class character for $K_2$ of conductor $\mathfrak{d}$ with finite part $\mathfrak{d}^*=\Disc(K_4/K_2)$.  The form of such characters has been determined in \cite[Lemma 17]{Bai80}, and each character is a product of a character on $(\O_{K_2}/\mathfrak{d^*})^\times$, a character on the class group of $K_2$, and a character on signature (see \cite{Bai80}[Eqn. (4)]).  There are at most 4 characters of signature, and $h_2(K_2)$ characters of the class group, which is $\ll_\ep (\Disc K_2)^{\ep}$ for every $\ep>0$ by genus theory (\ref{Gauss_2}).  It follows from the proof of \cite[Lemma 18]{Bai80}
that given any $C$, there are at most $O(2^{\omega(C)})$ ideals $\mathfrak{d^*}$ and possible characters on $(\O_{K_2}/\mathfrak{d^*})^\times$ with  $N_{K_2/\Q} (\mathfrak{d}^*)=C$.
Note $\Disc K_4=(\Disc K_2)^2 \cdot N_{K_2/\Q} (\Disc(K_4/K_2))$.
Thus, given $D$, in order to count the quartic $D_4$-fields of discriminant $D$, we sum for each $d|D$, the number of quartic $D_4$-fields $K_4$ that contain the quadratic field of discriminant $d$, and we find there are at most
$$
O_{\ep}\left(\sum_{d|D} d^\ep 2^{\omega(D/d^2)} \right)=O_{\ep}(D^{\ep})
$$
quartic $D_4$-fields of discriminant $D$.

\section*{Acknowledgements}
The authors thank J. Ellenberg,  J. Wang,  B. Alberts, and the referee for  insightful comments that improved and clarified the content and exposition.
Pierce has been partially supported by NSF CAREER grant DMS-1652173, a Sloan Research Fellowship, a Birman Fellowship, and as a von Neumann Fellow at the Institute for Advanced Study, by the Charles Simonyi Endowment and NSF Grant No. 1128155. Pierce thanks the Hausdorff Center for Mathematics for hospitality during visits as a Bonn Research Fellow.
Turnage-Butterbaugh is partially supported by NSF DMS-1902193 and NSF DMS-1854398 FRG, and thanks the Mathematical Sciences Research Institute (NSF DMS-1440140) and the Max Planck Institute for Mathematics for support and hospitality during portions of this work.
Wood was partially supported by an American Institute of Mathematics Five-Year Fellowship, a Packard Fellowship for Science and Engineering, a Sloan Research Fellowship, National Science Foundation grant DMS-1301690 and CAREER grant DMS-1652116, and a Vilas Early Career Investigator Award.  Wood thanks Princeton
University for its hospitality during Fall 2018.

\bibliographystyle{alpha}
\bibliography{NoThBibliography}

\newcommand{\etalchar}[1]{$^{#1}$}
\begin{thebibliography}{CDyDO02}

\bibitem[Alb18]{Alb18}
B.~Alberts.
\newblock The weak form of malle's conjecture and solvable groups.
\newblock {\em arXiv:1804.11318}, 2018.

\bibitem[An18]{An18}
C.~An.
\newblock $\ell$-torsion in class groups of certain families of ${D}_4$-quartic
  fields.
\newblock {\em arXiv:1808.02148}, 2018.

\bibitem[Bai80]{Bai80}
A.~M. Baily.
\newblock On the density of discriminants of quartic fields.
\newblock {\em J. Reine Angew. Math.}, 315:190--210, 1980.

\bibitem[BB09]{BomBou09}
E.~Bombieri and J.~Bourgain.
\newblock On {K}ahane's ultraflat polynomials.
\newblock {\em J. Eur. Math. Soc.}, 11:627--703, 2009.

\bibitem[Bha05]{Bha05}
M.~Bhargava.
\newblock The density of discriminants of quartic rings and fields.
\newblock {\em Annals of Mathematics}, 162(2):1031--1063, September 2005.

\bibitem[Bha10]{Bha10a}
M.~Bhargava.
\newblock The density of discriminants of quintic rings and fields.
\newblock {\em Annals of Mathematics}, 172(3):1559--1591, October 2010.

\bibitem[BL17]{BelLub17}
M.~Belolipetsky and A.~Lubotzky.
\newblock Counting non-uniform lattices.
\newblock {\em arXiv:1706.02180}, 2017.

\bibitem[BM90]{BruMcg90}
A.~Brumer and O.~McGuinness.
\newblock The behavior of the {M}ordell-{W}eil group of elliptic curves.
\newblock {\em Bull. Amer. Math. Soc. (N.S.)}, 23(2):375--382, 1990.

\bibitem[Bro15]{Bro15}
Timothy~Daniel Browning.
\newblock The polynomial sieve and equal sums of like polynomials.
\newblock {\em Int. Math. Res. Not. IMRN}, (7):1987--2019, 2015.

\bibitem[BS96]{BruSil96}
A.~Brumer and J.~H. Silverman.
\newblock The number of elliptic curves over {$\mathbb{Q}$} with conductor
  {$N$}.
\newblock {\em Manuscripta Math.}, 91(1):95--102, 1996.

\bibitem[BST{\etalchar{+}}17]{BSTTTZ17}
M.~Bhargava, A.~Shankar, T.~Taniguchi, F.~Thorne, J.~Tsimerman, and Y.~Zhao.
\newblock Bounds on 2-torsion in class groups of number fields and integral
  points on elliptic curves.
\newblock {\em arXiv:1701.02458}, 2017.

\bibitem[BSW16]{BSW16x}
M.~Bhargava, A.~Shankar, and X.~Wang.
\newblock Squarefree values of polynomial discriminants.
\newblock {\em arXiv:1611.09806v2}, 2016.

\bibitem[BV15]{BhaVar15}
M.~Bhargava and I.~Varma.
\newblock On the mean number of 2-torsion elements in the class groups, narrow
  class groups, and ideal groups of cubic orders and fields.
\newblock {\em Duke Math. J.}, 164:1911--1933, 2015.

\bibitem[BV16]{BhaVar16}
M.~Bhargava and I.~Varma.
\newblock The mean number of 3-torsion elements in the class groups and ideal
  groups of quadratic orders.
\newblock {\em Proc. London Math. Soc.}, 112:235--266, 2016.

\bibitem[CDyDO02]{CDO02}
H.~Cohen, F.~Diaz~y Diaz, and M.~Olivier.
\newblock Enumerating quartic dihedral extensions of {$\Bbb Q$}.
\newblock {\em Compositio Math.}, 133(1):65--93, 2002.

\bibitem[CL84]{CohLen84}
H.~Cohen and H.~W. Lenstra, Jr.
\newblock Heuristics on class groups of number fields.
\newblock In {\em Number theory, {N}oordwijkerhout 1983 ({N}oordwijkerhout,
  1983)}, volume 1068 of {\em Lecture Notes in Math.}, pages 33--62. Springer,
  Berlin, 1984.

\bibitem[CM90]{CohMar90}
H.~Cohen and J.~Martinet.
\newblock {\'E}tude heuristique des groupes de classes des corps de nombres.
\newblock {\em Journal f{\"u}r die Reine und Angewandte Mathematik},
  404:39--76, 1990.

\bibitem[Cou20]{Cou20}
Jean-Marc Couveignes.
\newblock Enumerating number fields.
\newblock {\em Ann. of Math. (2)}, 192(2):487--497, 2020.

\bibitem[CT16]{CohTho16x}
H.~Cohen and F.~Thorne.
\newblock On ${D}_\ell$-extensions of odd prime degree $\ell$.
\newblock {\em arXiv:1609.09153}, 2016.

\bibitem[Del74]{Del74}
P.~Deligne.
\newblock La conjecture de {W}eil {I}.
\newblock {\em Inst. Hautes \'{E}tudes Sc. Publ. Math. No.}, 43:273--307, 1974.

\bibitem[Del80]{Del80}
P.~Deligne.
\newblock La conjecture de {W}eil {II}.
\newblock {\em Inst. Hautes \'{E}tudes Sc. Publ. Math. No.}, 52:137--252, 1980.

\bibitem[DH71]{DavHei71}
H.~Davenport and H.~Heilbronn.
\newblock On the density of discriminants of cubic fields {II}.
\newblock {\em Proc. Roy. Soc. Lond. A.}, 322:405--420, 1971.

\bibitem[DK00]{DukKow00}
W.~Duke and E.~Kowalski.
\newblock A problem of {L}innik for elliptic curves and mean-value estimates
  for automorphic representations.
\newblock {\em Invent. math.}, 139:1--39, 2000.

\bibitem[Duk95]{Duk95}
W.~Duke.
\newblock The dimension of the space of cusp forms of weight one.
\newblock {\em Internat. Math. Res. Notices}, (2):99--109, 1995.

\bibitem[Duk98]{Duk98}
W.~Duke.
\newblock Bounds for arithmetic multiplicities.
\newblock In {\em Proceedings of the {I}nternational {C}ongress of
  {M}athematicians, {V}ol. {II} ({B}erlin, 1998)}, 1998.

\bibitem[Ell08]{Ell08}
Jordan~S. Ellenberg.
\newblock Points of low height on {$\Bbb P^1$} over number fields and bounds
  for torsion in class groups.
\newblock In {\em Computational arithmetic geometry}, volume 463 of {\em
  Contemp. Math.}, pages 45--48. Amer. Math. Soc., Providence, RI, 2008.

\bibitem[EPW17]{EPW17}
J.~S. Ellenberg, L.~B. Pierce, and M.~M. Wood.
\newblock On $\ell$-torsion in class groups of number fields.
\newblock {\em Algebra and Number Theory}, 11:1739--1778, 2017.

\bibitem[EV05]{EllVen05}
J.~S. Ellenberg and A.~Venkatesh.
\newblock Counting extensions of function fields with bounded discriminant and
  specified {G}alois group.
\newblock In {\em Geometric Methods in Algebra and Number Theory}, volume 235
  of {\em Progr. Math.}, pages 151--168. Birkh\"auser Boston, Boston, MA, 2005.

\bibitem[EV06]{EllVen06}
J.~S. Ellenberg and A.~Venkatesh.
\newblock The number of extensions of a number field with fixed degree and
  bounded discriminant.
\newblock {\em Ann. of Math. (2)}, 163(2):723--741, 2006.

\bibitem[EV07]{EllVen07}
J.~S. Ellenberg and A.~Venkatesh.
\newblock Reflection principles and bounds for class group torsion.
\newblock {\em Int. Math. Res. Not. IMRN}, (1):Art. ID rnm002, 18, 2007.

\bibitem[FK06]{FouKlu06}
{\'E}.~Fouvry and J.~Kl{\"u}ners.
\newblock On the 4-rank of class groups of quadratic number fields.
\newblock {\em Inventiones mathematicae}, 167(3):455--513, November 2006.

\bibitem[FNT92]{FNT92}
\'E. Fouvry, M.~Nair, and G.~Tenenbaum.
\newblock L'ensemble exceptionnel dans la conjecture de {S}zpiro.
\newblock {\em Bull. Soc. Math. France}, 120(4):485--506, 1992.

\bibitem[FW17]{FreWid17}
C.~Frei and M.~Widmer.
\newblock Average bounds for the $\ell$-torsion in class groups of cyclic
  extensions.
\newblock {\em arXiv:1709.09934}, 2017.

\bibitem[FW18]{FreWid18x}
C.~Frei and M.~Widmer.
\newblock Averages and higher moments for the $\ell$-torsion in class groups.
\newblock {\em arXiv:1810.04732}, 2018.

\bibitem[Gau01]{Gau01}
C.~F. Gauss.
\newblock {\em Disquisitiones {A}rithmeticae, 1801}.
\newblock trans. Arthur A. Clarke, Yale University Press (1965), 1801.

\bibitem[Has30]{Has30}
H.~Hasse.
\newblock Arithmetische {T}heorie der kubischen {Z}ahlk\"orper auf
  klassenk\"orpertheoretischer {G}rundlage.
\newblock {\em Math. Z.}, 31(1):565--582, 1930.

\bibitem[HBP17]{HBP17}
D.R. Heath-Brown and L.~B. Pierce.
\newblock Averages and moments associated to class numbers of imaginary
  quadratic fields.
\newblock {\em Compositio Math.}, 153:2287--2309, 2017.

\bibitem[Hei71]{Hei71}
H.~Heilbronn.
\newblock On the {$2$}-class group of cubic fields.
\newblock In {\em Studies in {P}ure {M}athematics ({P}resented to {R}ichard
  {R}ado)}, pages 117--119. Academic Press, London, 1971.

\bibitem[Hoo82]{Hoo82}
C.~Hooley.
\newblock On exponential sums and certain of their applications.
\newblock In {\em Number theory days, 1980 ({E}xeter, 1980)}, volume~56 of {\em
  London Math. Soc. Lecture Note Ser.}, pages 92--122. Cambridge Univ. Press,
  Cambridge-New York, 1982.

\bibitem[HV06]{HelVen06}
H.~A. Helfgott and A.~Venkatesh.
\newblock Integral points on elliptic curves and 3-torsion in class groups.
\newblock {\em J. Amer. Math. Soc.}, 19(3):527--550, 2006.

\bibitem[HW08]{HarWri08}
G.~H. Hardy and E.~M. Wright.
\newblock {\em An {I}ntroduction to the {T}heory of {N}umbers}.
\newblock D. R. Heath-Brown and J. H. Silverman. Oxford University Press, 6th
  edition, 2008.

\bibitem[IK04]{IwaKow04}
H.~Iwaniec and E.~Kowalski.
\newblock {\em Analytic {N}umber {T}heory}, volume~53.
\newblock Amer. Math. Soc. Colloquium Publications, Providence RI, 2004.

\bibitem[Kl{\"{u}}06a]{Klu06}
J.~Kl{\"{u}}ners.
\newblock Asymptotics of number fields and the {C}ohen-{L}enstra heuristics.
\newblock {\em J. Th\'{e}or. Nombres Bordeaux}, 18:607--615, 2006.

\bibitem[Kl{\"{u}}06b]{Klu06b}
J.~Kl{\"{u}}ners.
\newblock The number of {$S_4$}-fields with given discriminant.
\newblock {\em Acta Arith.}, 122(2):185--194, 2006.

\bibitem[Kly16]{Kly16}
J.~Klys.
\newblock The distribution of $p$-torsion in degree $p$ cyclic fields.
\newblock {\em arXiv:1610.00226 [math]}, October 2016.

\bibitem[KM16]{KoyMil16}
P.~{Koymans} and D.~{Milovic}.
\newblock {On the $16$-rank of class groups of $\mathbb{Q}(\sqrt{-p})$}.
\newblock {\em arXiv:1611.10337}, 2016.

\bibitem[KM17]{KoyMil17a}
P.~{Koymans} and D.~{Milovic}.
\newblock {On the $16$-rank of class groups of $\mathbb{Q}(\sqrt{-2p})$ for
  primes $p \equiv 1 \bmod 4$}.
\newblock {\em arXiv:1705.05505}, 2017.

\bibitem[Kow10]{Kow10}
E.~Kowalski.
\newblock Some aspects and applications of the {R}iemann hypothesis over finite
  fields.
\newblock {\em Milan J. Math.}, 78(1):179--220, 2010.

\bibitem[KP18]{KoyPag18x}
P.~Koymans and C.~Pagano.
\newblock On the distribution of {$\mathrm{Cl}(K)[\ell^\infty]$} for degree
  $\ell$ cyclic fields.
\newblock {\em arXiv:1812.06884}, 2018.

\bibitem[Mal02]{Mal02}
G.~Malle.
\newblock On the distribution of {G}alois groups.
\newblock {\em J. Number Theory}, 92:315--219, 2002.

\bibitem[Mal04]{Mal04}
G.~Malle.
\newblock On the distribution of {G}alois groups {II}.
\newblock {\em Experiment. Math.}, 13:129--135, 2004.

\bibitem[Mil17]{Mil17a}
D.~Milovic.
\newblock On the 16-rank of class groups of {$\Bbb Q(\sqrt{-8p})$} for
  {$p\equiv-1 \mod4$}.
\newblock {\em Geom. Funct. Anal.}, 27(4):973--1016, 2017.

\bibitem[Nar80]{Nar80}
W.~Narkiewicz.
\newblock {\em Elementary and Analytic Theory of Algebraic Numbers, edition 2}.
\newblock Springer-Verlag, Berlin, 1980.

\bibitem[OT20]{LemTho20}
Robert J.~Lemke Oliver and Frank Thorne.
\newblock Upper bounds on number fields of given degree and bounded
  discriminant, 2020.

\bibitem[Pie05]{Pie05}
L.~B. Pierce.
\newblock The 3-part of class numbers of quadratic fields.
\newblock {\em J. London Math. Soc.}, 71:579--598, 2005.

\bibitem[Pie06]{Pie06}
L.~B. Pierce.
\newblock A bound for the 3-part of class numbers of quadratic fields by means
  of the square sieve.
\newblock {\em Forum Math.}, 18:677--698, 2006.

\bibitem[PTBW20]{PTBW20}
L.~B. Pierce, C.~Turnage-Butterbaugh, and M.~Matchett Wood.
\newblock An effective {C}hebotarev density theorem for families of number
  fields, with an application to $\ell$-torsion in class groups.
\newblock {\em Inventiones Math.}, 219:701--778, 2020.

\bibitem[Rup98]{Rup98}
W.~M. Ruppert.
\newblock Small generators of number fields.
\newblock {\em Manuscripta Math.}, 96(1):17--22, 1998.

\bibitem[Sch76]{Sch76}
W.~Schmidt.
\newblock {\em Equations over Finite Fields: An Elementary Approach}.
\newblock Lecture Notes in Mathematics 536. Springer-Verlag, 1976.

\bibitem[Sch95]{Sch95}
Wolfgang~M. Schmidt.
\newblock Number fields of given degree and bounded discriminant.
\newblock Number 228, pages 4, 189--195. 1995.
\newblock Columbia University Number Theory Seminar (New York, 1992).

\bibitem[Ser97]{Ser97}
J.-P. Serre.
\newblock {\em Lectures on the {M}ordell-{W}eil {T}heorem (3rd ed.)}.
\newblock Friedr. Vieweg and Sohn, Braunschweig, 1997.

\bibitem[Sil84]{Sil84}
J.~H. Silverman.
\newblock Lower bounds for height functions.
\newblock {\em Duke Math. J.}, 51(2):395--403, 1984.

\bibitem[Sil09]{Sil09}
Joseph~H. Silverman.
\newblock {\em The arithmetic of elliptic curves}, volume 106 of {\em Graduate
  Texts in Mathematics}.
\newblock Springer, Dordrecht, second edition, 2009.

\bibitem[Smi16]{Smi16}
A.~Smith.
\newblock Governing fields and statistics for 4-{{Selmer}} groups and 8-class
  groups.
\newblock {\em arXiv:1607.07860}, 2016.

\bibitem[Smi17]{Smi17x}
A.~Smith.
\newblock $2^\infty$-{S}elmer groups, $2^\infty$-class groups, and {G}oldfeld's
  conjecture.
\newblock {\em arXiv:1702.02325v2}, 2017.

\bibitem[Sou00]{Sou00}
K.~Soundararajan.
\newblock Divisibility of class numbers of imaginary quadratic fields.
\newblock {\em J. London Math. Soc. (2)}, 61(3):681--690, 2000.

\bibitem[ST14]{ShaTsi14}
A.~Shankar and J.~Tsimerman.
\newblock Counting ${S}_5$-fields with a power saving error term.
\newblock {\em Forum of Mathematics, Sigma}, 2, 2014.

\bibitem[Szp90]{Szp90}
L.~Szpiro.
\newblock Discriminant et conducteur des courbes elliptiques.
\newblock {\em Ast\'erisque}, (183):7--18, 1990.
\newblock S\'eminaire sur les Pinceaux de Courbes Elliptiques (Paris, 1988).

\bibitem[Tit86]{Tit86}
E.~C. Titchmarsh.
\newblock {\em The theory of the {R}iemann zeta-function}.
\newblock The Clarendon Press, Oxford University Press, New York, second
  edition, 1986.
\newblock Edited and with a preface by D. R. Heath-Brown.

\bibitem[Wat08]{Wat08}
M.~Watkins.
\newblock Some heuristics about elliptic curves.
\newblock {\em Experiment. Math.}, 17(1):105--125, 2008.

\bibitem[Wid18]{Wid18}
M.~Widmer.
\newblock Bounds for the $\ell$-torsion in class groups.
\newblock {\em Bull. L.M.S.}, (to appear), 2018.

\bibitem[Won01a]{Won01a}
S.~Wong.
\newblock Exponents of class groups and elliptic curves.
\newblock {\em J. Number Theory}, 89(1):114--120, 2001.

\bibitem[Won01b]{Won01b}
S.~Wong.
\newblock {E}xponents of class groups and elliptic curves, corrigendum.
\newblock {\em J. Number Theory}, 90(2):376--377, 2001.

\bibitem[Woo15]{Woo15}
M.~M. Wood.
\newblock Random integral matrices and the {C}ohen-{L}enstra {H}euristics.
\newblock {\em arXiv:1504.04391}, 2015.

\bibitem[Woo16]{Woo16}
M.~M. Wood.
\newblock {\em Asymptotics for number fields and class groups}.
\newblock Springer, 2016.

\bibitem[Wri89]{Wri89}
D.~Wright.
\newblock Distribution of discriminants of abelian extensions.
\newblock {\em Proc. London Math. Soc.}, 58:17--50, 1989.

\bibitem[WW19]{Wang2019}
Weitong Wang and Melanie~Matchett Wood.
\newblock Moments and interpretations of the {{Cohen}}-{{Lenstra}}-{{Martinet}}
  heuristics.
\newblock {\em arXiv:1907.11201 [math]}, July 2019.

\bibitem[You15]{You15}
Matthew~P. Young.
\newblock The number of solutions to {M}ordell's equation in constrained
  ranges.
\newblock {\em Mathematika}, 61(3):708--718, 2015.

\bibitem[Zha05]{Zha05}
S.-W. Zhang.
\newblock Equidistribution of {CM}-points on quaternion {S}himura varieties.
\newblock {\em Int. Math. Res. Not.}, (59):3657--3689, 2005.

\end{thebibliography}
\label{endofproposal}

\end{document}